\numberwithin{equation}{section}
\newcommand{\ii}{{\rm{i}}}
\newcommand{\bS}{\mathbb{S}}
\def\dint{\textup{d}}
\newcommand{\E}{\mathbb E}
\newcommand{\R}{\mathbb{R}}
\newcommand{\N}{\mathbb{N}}
\newcommand{\C}{\mathbb{C}}
\newcommand{\Z}{\mathbb{Z}}
\renewcommand{\P}{\mathbb{P}}
\renewcommand{\Re}{\operatorname{Re}}
\renewcommand{\Im}{\operatorname{Im}}
\newcommand{\conv}{\mathop{\mathrm{conv}}\nolimits}
\newcommand{\pos}{\mathop{\mathrm{pos}}\nolimits}
\newcommand{\eps}{\varepsilon}
\newcommand{\bsl}{\backslash}
\newcommand{\dd}{{\rm d}}
\newcommand{\eee}{{\rm e}}
\theoremstyle{plain}
\newtheorem{theorem}{Theorem}[section]
\newtheorem{lemma}[theorem]{Lemma}
\newtheorem{proposition}[theorem]{Proposition}
\newtheorem{conjecture}[theorem]{Conjecture}
\newtheorem{open}[theorem]{Open question}
\theoremstyle{definition}
\newtheorem{example}[theorem]{Example}
\theoremstyle{remark}
\newtheorem{remark}[theorem]{Remark}
\begin{document}

\author{Zakhar Kabluchko}
\address{Zakhar Kabluchko: Institut f\"ur Mathematische Stochastik,
Westf\"alische Wilhelms-Universit\"at M\"unster,
Orl\'eans-Ring 10,
48149 M\"unster, Germany}
\email{zakhar.kabluchko@uni-muenster.de}

\title[Face numbers of high-dimensional Poisson zero cells]{Face numbers of high-dimensional Poisson zero cells}

\keywords{Stochastic geometry, random polytope, Poisson hyperplane tessellation, zero cell, Poisson polyhedron, saddle point method, solid angle, random cone, neighborly polytope}

\subjclass[2010]{Primary: 60D05, 52A23; Secondary: 52A22, 52B11, 52B05, 60G55, 33E20.}

\begin{abstract}
Let $\mathcal Z_d$ be the zero cell of a $d$-dimensional, isotropic and stationary Poisson hyperplane tessellation.
We study the asymptotic behavior of the expected number of $k$-dimensional faces of $\mathcal Z_d$, as $d\to\infty$. For example, we show that the expected number of hyperfaces of $\mathcal Z_d$ is asymptotically equivalent to $\sqrt{2\pi/3}\, d^{3/2}$, as $d\to\infty$.
 We also prove that the expected solid angle of a random cone spanned by $d$ random vectors that are independent and uniformly distributed on the unit upper half-sphere in $\mathbb R^{d}$ is asymptotic to $\sqrt 3  \pi^{-d}$, as $d\to\infty$.
\end{abstract}

\maketitle

\section{Main results}

\subsection{Random cones in a half-space}
Fix some $\ell\in \N_0$ and let $V_1,\ldots,V_{d+\ell}$ be $d+\ell$ random unit vectors drawn uniformly at random from the unit sphere $\bS^{d-1}$ in $\R^d$.
The convex cone spanned by these vectors, also known as their \textit{positive hull}, is denoted by
$$
D_{d+\ell, d} := \pos(V_1,\ldots,V_{d+\ell}) := \left\{\sum_{i=1}^{d+\ell} \lambda_i V_i: \lambda_1,\ldots,\lambda_{d+\ell}\geq 0\right\}.
$$
Explicit formulas for various geometric characteristics of the cones $D_{d+\ell,d}$ and several closely related objects have been derived by Wendel~\cite{Wendel}, Cover and Efron~\cite{cover_efron}, Donoho and Tanner~\cite{donoho_tanner1} and Hug and Schneider~\cite{HS15}. On the other hand, let $U_1,\ldots,U_{d+\ell}$ be random unit vectors sampled uniformly and independently from the \textit{upper half-sphere}
$$
\bS^{d-1}_+:=\{x = (x_1,\ldots,x_{d})\in \R^{d}:\, x_1\geq 0,\, x_1^2 +  \ldots + x_d^2 = 1\}.
$$
The convex cone  generated by these points was studied in~\cite{barany_etal,convex_hull_sphere,kabluchko_poisson_zero,kabluchko_simplified_formulas} and will be  denoted by
$$
C_{d+\ell, d} := \pos(U_1,\ldots,U_{d+\ell}) := \left\{\sum_{i=1}^{d+\ell} \lambda_i U_i: \lambda_1,\ldots,\lambda_{d+\ell}\geq 0\right\}.
$$
Let $\alpha(C)$ denote the solid angle of a convex cone $C$ normalized in such a way that the solid angle of the full space is $1$. Then, it can be deduced from Wendel's formula~\cite{Wendel}, see~\cite[Lemma~5.1]{godland_kabluchko_thaele_cover_efron}, that the expected solid angle of the cone $D_{d+\ell, d}$ is given by
\begin{equation}\label{eq:solid_angle_full_sphere_explicit}
\E \alpha (D_{d+\ell, d}) = \frac 1 {2^{d+\ell}} \sum_{j=d}^{d+\ell} \binom {d+\ell}{j}.
\end{equation}
For example, the expected value of the solid angle of $D_{d,d}$ equals $2^{-d}$ for symmetry reasons.
On the other hand, the expected solid angle of the random cone $C_{d+\ell, d}$ has been computed explicitly in~\cite{kabluchko_poisson_zero} and can be expressed through the numbers $A[n,k]$, indexed by $n\in\N_0$ and $k\in \Z$ with $k\leq n$, which may be defined by
\begin{equation}\label{eq:A_n_k_def}
A[n,k] := \frac{n!}{(n-k)!} \frac 1 {\pi} \int_{-\infty}^{+\infty} (\cosh x)^{-n-1} \left(\frac \pi 2 + \ii x\right)^{n-k} \dd x.
\end{equation}
This formula can be found in Section~6.9 and Eqn.~(6.8) (with $\alpha=1$) in~\cite{kabluchko_formula}; see also~\cite[Theorem~1.1]{kabluchko_poisson_zero} and~\cite[Section~2.2]{kabluchko_formula} for other, equivalent definitions of these numbers. For example, it follows from Theorem~2.5 in~\cite{kabluchko_poisson_zero} that
\begin{equation}\label{eq:E_alpha_C_d_d}
\E \alpha (C_{d,d}) = \frac{1}{2\cdot \pi^d} \frac{\sqrt \pi \, \Gamma(\frac{d+1}{2})}{\Gamma(\frac {d+2}{2})} d^2 A[d-1, -1].
\end{equation}

The asymptotic behavior of various quantities associated with $D_{d+\ell,d}$ and several closely related families of cones in the high-dimensional regime when $d\to\infty$ has been investigated in~\cite{donoho_tanner1,HugSchneiderThresholdPhenomena,godland_kabluchko_thaele_cover_efron,HugSchneiderThresholdPhenomena2}.
For example, \eqref{eq:solid_angle_full_sphere_explicit} easily yields that for every fixed $\ell\in \N_0$ we have\footnote{
Asymptotic equivalence of sequences is defined in the usual way: $a_d \sim b_d$ as $d\to\infty$  means that $\lim_{d\to\infty} a_d/b_d = 1$.}
\begin{equation}\label{eq:solid_angle_full_sphere_asympt}
\E \alpha (D_{d+\ell, d}) \sim \frac{(d/2)^\ell}{\ell!} 2^{-d},
\qquad
d\to\infty.
\end{equation}
In the present note, we are interested in the asymptotic behavior of the solid angle of $C_{d+\ell, d}$ and some related quantities,  as $d\to\infty$. The next theorem will be established by  means of  a saddle-point analysis of~\eqref{eq:A_n_k_def}.

\begin{theorem}\label{theo:asympt_A_n_k}
For every $k\in \Z$ we have\footnote{Note that for $k\in \{-2,-4,\ldots\}$ both sided of~\eqref{eq:A_n_k_asympt_main_theo} vanish because then $\Gamma(\frac k2 + 1) = \infty$ and $A[d,k] = 0$. The latter follows from Equations~(1.2) and~(1.3) in~\cite{kabluchko_poisson_zero}.}
\begin{equation}\label{eq:A_n_k_asympt_main_theo}
\lim_{d\to\infty} \frac{A[d,k]}{d^{3k/2}} = \frac{1}{6^{k/2} \Gamma(\frac k2 + 1)}.
\end{equation}
\end{theorem}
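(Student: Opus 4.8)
The plan is to analyze the integral in~\eqref{eq:A_n_k_def} by a saddle-point argument after a change of variables that exposes its analytic structure. Writing $d$ for $n$, I would substitute $u = \tfrac\pi2 + \ii x$: this turns the vertical factor into $u^{d-k}$, and since $\cosh x = \cosh\!\bigl(\ii(\tfrac\pi2 - u)\bigr) = \sin u$, it converts $(\cosh x)^{-d-1}$ into $(\sin u)^{-d-1}$. With $\dd x = -\ii\,\dd u$ this recasts the integral as a contour integral along the vertical line $\Re u = \tfrac\pi2$,
\begin{equation*}
\frac 1\pi \int_{-\infty}^{+\infty}(\cosh x)^{-d-1}\Bigl(\tfrac\pi2 + \ii x\Bigr)^{d-k}\dd x
= \frac{-\ii}{\pi}\int_{\Re u = \pi/2} u^{-k-1}\Bigl(\frac{u}{\sin u}\Bigr)^{d+1}\dd u,
\end{equation*}
where I have grouped the integrand as $u^{-k-1}(u/\sin u)^{d+1}$. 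The factor $(u/\sin u)^{d+1} = \eee^{(d+1)G(u)}$ with $G(u) := \log\frac{u}{\sin u} = \frac{u^2}{6} + O(u^4)$ is analytic at $u=0$ and has a nondegenerate saddle there, while $u^{-k-1}$ contributes a pole (for $k\ge 0$) or a zero (for $k\le -2$) of the full integrand at the same point $u=0$.

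Since $u/\sin u$ has no poles in $|\Re u| < \pi$ apart from the removable one at $u=0$, I can deform the line $\Re u = \tfrac\pi2$ onto the imaginary axis, indented by a small semicircle to the right of the origin, without crossing any singularity; the integrals converge at $\pm\ii\infty$ because $|u/\sin u|\to 0$ there. On the imaginary axis $u=\ii v$ one has $G(\ii v) = \log\frac{v}{\sinh v}\le 0$, so the integrand is concentrated near $u=0$ on the scale $u\sim d^{-1/2}$. I would therefore rescale $u = w\sqrt{6/(d+1)}$, under which $(d+1)G(u)\to w^2$ pointwise and $u^{-k-1}\dd u = \bigl(\tfrac{d+1}{6}\bigr)^{k/2} w^{-k-1}\dd w$, and pass to the limit by dominated convergence (using a bound of the form $v/\sinh v \le \eee^{-cv^2}$ on the core and exponential decay of $(v/\sinh v)^{d+1}$ on the tails). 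This gives
\begin{equation*}
\frac{-\ii}{\pi}\int_{\Re u = \pi/2} u^{-k-1}\Bigl(\frac{u}{\sin u}\Bigr)^{d+1}\dd u
\sim \frac{-\ii}{\pi}\Bigl(\frac{d}{6}\Bigr)^{k/2} J_k,
\qquad
J_k := \int_{\gamma} w^{-k-1}\eee^{w^2}\dd w,
\end{equation*}
with $\gamma$ the imaginary axis indented to the right of $0$ and oriented upward.

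It remains to evaluate $J_k$ and combine it with the prefactor. The substitution $p = w^2$ maps $\gamma$ onto a contour encircling the negative real axis counterclockwise and turns $J_k$ into $\tfrac12\int_{\mathcal H}\eee^{p} p^{-(k/2+1)}\dd p$; by Hankel's representation $\frac{1}{\Gamma(z)} = \frac{1}{2\pi\ii}\int_{\mathcal H}\eee^{p}p^{-z}\dd p$ this equals $\frac{\ii\pi}{\Gamma(\frac k2+1)}$. (When $k\in\{-2,-4,\dots\}$ the contour collapses and $J_k=0$, matching the vanishing of the right-hand side.) Finally, for fixed $k$ one has $\frac{d!}{(d-k)!}\sim d^k$ as $d\to\infty$, so
\begin{equation*}
A[d,k] = \frac{d!}{(d-k)!}\cdot \frac{-\ii}{\pi}\int_{\Re u=\pi/2}u^{-k-1}\Bigl(\frac u{\sin u}\Bigr)^{d+1}\dd u
\sim d^k\cdot\frac{-\ii}{\pi}\Bigl(\frac d6\Bigr)^{k/2}\cdot\frac{\ii\pi}{\Gamma(\frac k2+1)}
= \frac{d^{3k/2}}{6^{k/2}\,\Gamma(\frac k2+1)},
\end{equation*}
using $-\ii\cdot\ii = 1$, which is exactly~\eqref{eq:A_n_k_asympt_main_theo}. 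The main obstacle is the rigorous saddle-point estimate in the middle step: because the saddle of $(u/\sin u)^{d+1}$ coincides with the pole of $u^{-k-1}$, the classical saddle-point theorems do not apply verbatim, and the confluence must be resolved by retaining the explicit indented contour and establishing uniform domination of the rescaled integrand — this is where the genuine care is required.
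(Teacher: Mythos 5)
Your proof is correct and is essentially the paper's own argument: the paper establishes Theorem~\ref{theo:Ef_Poi_poly_fixed_k} for general $\alpha$ by exactly this scheme --- rotating the integral to the vertical line $\Re v = \tfrac\pi2$, deforming to an imaginary axis indented at the origin (the contour $\gamma_{1/\sqrt d}$), rescaling $v \propto w/\sqrt d$ at the point where the saddle and the singularity coincide, verifying via the elementary inequality $y/\sinh y < 1$ that the modulus on the contour peaks at the origin, and evaluating the limiting integral by Hankel's formula --- and then deduces Theorem~\ref{theo:asympt_A_n_k} by setting $\alpha = 1$, which is precisely your computation. The differences (grouping the integrand as $u^{-k-1}(u/\sin u)^{d+1}$ rather than as a $d$-th power times a remainder, and normalizing the rescaling so the Gaussian factor is $\eee^{w^2}$ rather than $\eee^{w^2/2}$) are cosmetic.
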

For example, Theorem~\ref{theo:asympt_A_n_k} combined with~\eqref{eq:E_alpha_C_d_d} easily implies that  $\E \alpha (C_{d, d}) \sim \sqrt 3 \,  \pi^{-d}$.
More generally, as a consequence of Theorem~\ref{theo:asympt_A_n_k} we shall derive the following


\begin{theorem}\label{theo:asympt_angle}
Let $\ell\in \N_0$ be fixed. Then,
\begin{equation}\label{eq:solid_angle_half_sphere_asympt}
\E \alpha (C_{d+\ell, d}) \sim \sqrt 3 \,\frac{(d/2)^{\ell}}{\ell!} \pi^{-d},
\qquad
d\to\infty.
\end{equation}
\end{theorem}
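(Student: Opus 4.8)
The plan is to combine Theorem~\ref{theo:asympt_A_n_k} with an exact formula for $\E\alpha(C_{d+\ell,d})$ and then read off the leading term. The excerpt records such a formula only for $\ell=0$, namely \eqref{eq:E_alpha_C_d_d}; for general $\ell$ I would invoke the corresponding explicit expression from \cite{kabluchko_poisson_zero}, which represents $\E\alpha(C_{d+\ell,d})$ as a ratio of Gamma functions times a polynomial prefactor in $d$ times a number $A[d+\ell-1,-1]$ (more precisely, a finite linear combination of the numbers $A[d+\ell-1,k]$ that is asymptotically dominated by the term with second index $-1$). The whole argument then reduces to substituting the $k=-1$ case of Theorem~\ref{theo:asympt_A_n_k} together with Stirling's formula for the Gamma ratio.

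It is instructive to run the case $\ell=0$ first, since then I may use \eqref{eq:E_alpha_C_d_d} verbatim. Theorem~\ref{theo:asympt_A_n_k} with $k=-1$ gives $A[d-1,-1]\sim\frac{\sqrt6}{\sqrt\pi}\,d^{-3/2}$, while Stirling yields $\frac{\Gamma(\frac{d+1}{2})}{\Gamma(\frac{d+2}{2})}\sim\sqrt{2/d}$. Plugging both into \eqref{eq:E_alpha_C_d_d},
\begin{equation*}
\E\alpha(C_{d,d})\sim\frac{1}{2\pi^d}\,\sqrt\pi\,\sqrt{\tfrac2d}\,\cdot d^2\cdot\frac{\sqrt6}{\sqrt\pi}\,d^{-3/2}=\frac{\sqrt{12}}{2}\,\pi^{-d}=\sqrt3\,\pi^{-d},
\end{equation*}
which is \eqref{eq:solid_angle_half_sphere_asympt} for $\ell=0$. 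Observe that the universal constant $\sqrt3$ is produced entirely by the factor $\sqrt6=\sqrt2\,\sqrt3$ coming from the $k=-1$ asymptotics of $A$; this is the structural reason the dominant $A$-term must carry second index exactly $-1$, since any other fixed second index $k$ would contribute a power $6^{-k/2}$, and hence a spurious power of $3$, incompatible with the clean limit.

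For general fixed $\ell$ the computation is identical in spirit. The polynomial prefactor in the general formula differs from the $\ell=0$ prefactor ($d^2$) by an additional factor asymptotic to $(d/2)^\ell/\ell!$, the very same combinatorial factor that governs the full-sphere cones in \eqref{eq:solid_angle_full_sphere_asympt}, so that, retaining $A[d+\ell-1,-1]\sim\frac{\sqrt6}{\sqrt\pi}\,d^{-3/2}$ and $\frac{\Gamma(\frac{d+1}{2})}{\Gamma(\frac{d+2}{2})}\sim\sqrt{2/d}$, one obtains
\begin{equation*}
\E\alpha(C_{d+\ell,d})\sim\frac{1}{2\pi^d}\,\sqrt\pi\,\sqrt{\tfrac2d}\,\cdot\frac{d^{\ell+2}}{2^\ell\,\ell!}\cdot\frac{\sqrt6}{\sqrt\pi}\,d^{-3/2}=\sqrt3\,\frac{(d/2)^\ell}{\ell!}\,\pi^{-d},
\end{equation*}
exactly the asserted equivalence. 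Since $\ell$ is fixed, every appeal to Theorem~\ref{theo:asympt_A_n_k} is at a single fixed value of $k$, so no uniformity in $k$ is required.

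The main obstacle is not the asymptotic analysis above but the bookkeeping needed to set down and verify the exact general-$\ell$ formula, which the excerpt supplies only for $\ell=0$. Concretely, I must confirm that the $\ell$-dependent prefactor is asymptotic to $d^{\ell+2}/(2^\ell\,\ell!)$ (equivalently, that the $\ell$ extra generators contribute precisely the factor $(d/2)^\ell/\ell!$), and, should the exact expression turn out to be a finite sum of numbers $A[d+\ell-1,k]$ rather than a single term, that the contribution with $k=-1$ strictly dominates the others as $d\to\infty$, so that the clean constant $\sqrt3$ survives with no leftover power of $3$. Once the exact formula is in hand, the remaining steps are the routine Stirling estimate of the Gamma ratio and the elementary simplification $\sqrt{12}/2=\sqrt3$.
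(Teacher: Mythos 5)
Your $\ell=0$ computation is correct, and it is exactly the remark the paper itself makes right after Theorem~\ref{theo:asympt_A_n_k}: combining \eqref{eq:E_alpha_C_d_d} with the $k=-1$ case of that theorem and Stirling's formula gives $\E\alpha(C_{d,d})\sim\sqrt3\,\pi^{-d}$. The problem is everything beyond $\ell=0$, which is the actual content of Theorem~\ref{theo:asympt_angle} and which you leave resting on a conjectured shape of the exact formula. That guess is wrong in a way that matters. Theorem~2.5 of \cite{kabluchko_poisson_zero} does not express $\E\alpha(C_{d+\ell,d})$ as a Gamma ratio times a polynomial prefactor times a combination of numbers $A[d+\ell-1,k]$ with varying second index $k$; it gives
\begin{equation*}
\E \alpha (C_{d+\ell, d}) = \frac{(d+\ell)!}{2\cdot\pi^{d+\ell}} \sum_{\substack{j\in \{0,\ldots,\ell\}\\ j \text{ even}}} B\{d+\ell+1, d + j + 1\}\, (d+j)^2\, A[d+j-1,-1],
\end{equation*}
where every $A$-number already carries second index $-1$ but varying \emph{first} index, and where
\begin{equation*}
B\{d+\ell+1,d+j+1\} = \frac{1}{(d+j)!\,(\ell-j)!}\int_0^\pi (\sin x)^{d+j}\, x^{\ell-j}\,\dint x .
\end{equation*}
So the issue you worry about (a leftover power of $3$ from terms with $k\neq-1$) never arises, but two genuinely nontrivial steps do, and your proposal contains neither.

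First, one must find the asymptotics of the coefficients $B$, which is a Laplace-method computation: the maximum of $\sin x$ on $[0,\pi]$ sits at $\pi/2$, giving $\int_0^\pi(\sin x)^{d+j}x^{\ell-j}\,\dint x\sim(\pi/2)^{\ell-j}\sqrt{2\pi/d}$. This, together with $(d+\ell)!/d!\sim d^\ell$, is precisely where the factor $(\pi/2)^\ell/\ell!$ --- and hence, after cancellation against $\pi^{-\ell}$, the asserted $(d/2)^\ell/\ell!$ --- comes from; it is not a ratio of Gamma functions and not Stirling bookkeeping. Second, one must show that the $j=0$ term dominates the sum, which holds because the $j$-th term is smaller than the $j=0$ term by a factor of order $d^{-j}$. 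Your postulated prefactor $d^{\ell+2}/(2^\ell\ell!)$ is something you could only confirm \emph{after} carrying out both steps, so as written the general-$\ell$ case is assumed rather than proved; the paper's proof consists essentially of these two steps.
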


An application of Theorem~\ref{theo:asympt_A_n_k} to the analysis of sequential decision making algorithms can be found in~\cite{banerjee_halpern_peters}.





\begin{open}
Determine the asymptotics of the variance and the limit distribution of the solid angles of $C_{d,d}$ and $D_{d,d}$, as $d\to\infty$.
\end{open}

It has been shown in~\cite{kabluchko_poisson_zero} that the numbers $A[n,k]$ appear also in connection with several other objects including the zero cell of the $d$-dimensional, isotropic and stationary Poisson hyperplane tessellation, while certain more general numbers appear in connection with beta' polytopes and their limits, the so-called Poisson polyhedra.  In the following we shall study the expected number of faces of these random polytopes in the regime when $d\to \infty$ continuing the line of research which was initiated in~\cite{HoermannHugReitznerThaele}.
Let us define the objects we are interested in.

\subsection{Poisson zero cells}
Consider a unit intensity Poisson point process  $\{T_i\}_{i\in\Z}$ on the real line. Independently, let $\{V_i\}_{i\in\Z}$  be independent, identically distributed  random vectors  distributed uniformly on the unit sphere $\bS^{d-1}$ in $\R^d$.  A (stationary and isotropic) \textit{Poisson hyperplane process} with unit intensity is a random, countable collection $\{H_i\}_{i\in \Z}$ of affine hyperplanes given by
$$
H_i:= \{x\in \R^d: \langle x, V_i \rangle = T_i\}, \qquad i\in \Z.
$$
Note that the $V_i$'s determine the normal directions of the hyperplanes, while the distances from the hyperplanes to the origin are given by the absolute values of the $T_i$'s.

The hyperplanes dissect $\R^d$ into countably many cells which constitute the so-called \textit{Poisson hyperplane tessellation}. Its law stays invariant under isometries of $\R^d$. The \textit{Poisson zero polytope} $\mathcal Z_d$ is the a.s.\ unique polytope of this tessellation that contains the origin.
The expected number of faces of $\mathcal Z_d$ has been determined in~\cite[Theorem~1.1]{kabluchko_poisson_zero} as follows: For all $d\in \N$ and $\ell\in \{0,\ldots,d\}$ we have
\begin{equation}\label{eq:theo:main}
\E f_\ell(\mathcal Z_d) = \frac{\pi^{d-\ell}}{(d-\ell)!} A[d,d-\ell],
\end{equation}
where $f_\ell(P)$ denotes the number of $\ell$-dimensional faces of a polytope $P$ and the numbers $A[n,k]$ are the same as in~\eqref{eq:A_n_k_def}. Combined with Theorem~\ref{theo:asympt_A_n_k}, this formula yields the following
\begin{theorem}\label{theo:exp_f_poi_zero_cell_fixed_k}
For every $k\in \N$ we have
\begin{equation}\label{eq:Ef_k_alpha_1}
\E f_{d-k} (\mathcal Z_d)
\sim  \frac{d^{3k/2}}{k! \Gamma\left(\frac k2 + 1\right)} \left(\frac{\pi}{\sqrt 6}\right)^k,
\qquad
d\to\infty.
\end{equation}
\end{theorem}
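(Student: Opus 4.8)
The plan is to read the result off as an immediate specialization of the exact formula~\eqref{eq:theo:main}, fed into the asymptotics of $A[d,k]$ supplied by Theorem~\ref{theo:asympt_A_n_k}. First I would fix $k\in\N$ and set $\ell=d-k$ in~\eqref{eq:theo:main}. This choice is admissible as soon as $d\geq k$, which holds for all sufficiently large $d$ and therefore does not affect an asymptotic statement. Since $d-\ell=k$, the formula collapses to
\[
\E f_{d-k}(\mathcal Z_d) = \frac{\pi^{k}}{k!}\, A[d,k].
\]

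Next I would invoke Theorem~\ref{theo:asympt_A_n_k} with the fixed positive integer $k$, which gives $A[d,k]\sim d^{3k/2}\big/\big(6^{k/2}\,\Gamma(\tfrac k2+1)\big)$ as $d\to\infty$; note that for $k\in\N$ the gamma factor is finite and nonzero, so the equivalence is genuine. Substituting this into the displayed identity yields
\[
\E f_{d-k}(\mathcal Z_d) \sim \frac{\pi^{k}}{k!}\cdot \frac{d^{3k/2}}{6^{k/2}\,\Gamma\!\left(\tfrac k2 + 1\right)}
= \frac{d^{3k/2}}{k!\,\Gamma\!\left(\tfrac k2 + 1\right)}\left(\frac{\pi}{\sqrt 6}\right)^{k},
\]
where the last step uses $\pi^{k}/6^{k/2}=(\pi/\sqrt 6)^{k}$. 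This is exactly~\eqref{eq:Ef_k_alpha_1}, which completes the argument.

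I do not expect any genuine obstacle at this level: the entire analytic weight of the statement is carried by Theorem~\ref{theo:asympt_A_n_k}, whose proof is the saddle-point analysis of the integral~\eqref{eq:A_n_k_def} performed elsewhere in the paper, and which I am free to assume here. The only point demanding a word of care is the index constraint $\ell\in\{0,\ldots,d\}$ attached to~\eqref{eq:theo:main}, which forces $d\geq k$; since $k$ is held fixed while $d\to\infty$, this restriction is harmless and the asymptotic relation is unaffected.
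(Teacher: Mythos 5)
Your proposal is correct and coincides with the paper's own (implicit) proof: the paper derives Theorem~\ref{theo:exp_f_poi_zero_cell_fixed_k} precisely by substituting $\ell = d-k$ into~\eqref{eq:theo:main} to get $\E f_{d-k}(\mathcal Z_d) = \frac{\pi^k}{k!} A[d,k]$ and then invoking the asymptotics of Theorem~\ref{theo:asympt_A_n_k}. Your remarks on the finiteness of $\Gamma(\frac k2+1)$ for $k\in\N$ and the harmlessness of the constraint $d\geq k$ are accurate and require no further justification.
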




\subsection{Poisson polyhedra}
For a parameter $\alpha>0$ let $\Pi_{d,\alpha}$ be a Poisson point process on $\R^d\backslash\{0\}$ with the following  power-law intensity (w.r.t.\ the Lebesgue measure):
$$
x\mapsto \|x\|^{-d-\alpha},\qquad x\in \R^d \bsl \{0\}.
$$
With probability $1$, the total number of atoms of $\Pi_{d,\alpha}$ is infinite, and the atoms accumulate at the origin (because the intensity is not integrable at $0$). On the other hand, the set of atoms is a.s.\ bounded (because the intensity is integrable at $\infty$).
The convex hull of the atoms of $\Pi_{d,\alpha}$, denoted by $\conv \Pi_{d,\alpha}$, will be referred to as the \textit{Poisson polyhedron}. It is known~\cite{convex_hull_sphere}  that $\conv \Pi_{d,\alpha}$ is almost surely a polytope despite of being defined as a convex hull of an \textit{infinite} set. Moreover, $\conv \Pi_{d,\alpha}$ is a.s.\ a \textit{simplicial polytope} meaning that all of its faces are simplices. Poisson polyhedra (and their dual objects, the zero cells of certain non-stationary Poisson hyperplane tessellations with power-law intensity) appeared in~\cite{HugSchneider07LargeCells,hoermann_hug,HoermannHugReitznerThaele,convex_hull_sphere,beta_polytopes,kabluchko_formula}.
It is known, see, e.g.~\cite[Theorem 1.23]{beta_polytopes} that, up to rescaling, $\conv\Pi_{d,1}$ (with $\alpha=1$) has the same distribution as $\mathcal Z_d^\circ:= \{x\in \R^d: \langle x,y\rangle \leq 1 \text{ for all } y\in \mathcal Z_d\}$,  the convex dual of $\mathcal Z_d$. In particular, we have
\begin{equation}\label{eq:duality}
\E f_k(\mathcal Z_d) = \E f_{d-k-1}(\conv\Pi_{d,1}),
\qquad
k\in \{0,\ldots, d-1\}.
\end{equation}
An explicit formula for the expected number of $k$-dimensional faces of $\conv \Pi_{d,\alpha}$, for any $k\in\{0,\ldots,d-1\}$ and $\alpha>0$, has been derived in~\cite{kabluchko_formula} and~\cite{beta_polytopes}. An asymptotic analysis of this formula, which will be carried out in Section~\ref{subsec:proof_Ef_poi_polyhedron_fixed_k}, yields the following result.
\begin{theorem}\label{theo:Ef_Poi_poly_fixed_k}
Let $\alpha>0$ and $k\in \N$ be fixed. Then, as $d\to\infty$, we have
$$
\E f_{k-1}(\conv \Pi_{d,\alpha}) \sim  \frac{d^{k(1+\frac \alpha 2)}}{k! \Gamma \left(\frac {\alpha k }{2} + 1\right)} \left(\frac{\alpha \sqrt \pi\,   \Gamma (\frac \alpha 2)}{\Gamma (\frac{\alpha+1}{2})(2 + \frac 4 \alpha)^{\alpha/2}  }\right)^k.
$$
\end{theorem}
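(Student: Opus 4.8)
The plan is to start from the explicit formula for $\E f_{k-1}(\conv \Pi_{d,\alpha})$ derived in~\cite{kabluchko_formula,beta_polytopes} and to subject it to a saddle-point analysis that parallels, and generalizes, the one behind Theorem~\ref{theo:asympt_A_n_k}. That formula expresses $\E f_{k-1}(\conv \Pi_{d,\alpha})$ as an explicit prefactor, built from factorials and Gamma functions of $d$, $k$ and $\alpha$, times a one-parameter family of integrals of the same type as~\eqref{eq:A_n_k_def}, say
\[
I_{d,k,\alpha} = \frac{1}{\pi}\int_{-\infty}^{+\infty} g_\alpha(x)^{-(d+c_1)}\, h_\alpha(x)^{\,d-k}\,\dd x ,
\]
where $g_\alpha, h_\alpha$ are $\alpha$-deformations of $\cosh x$ and $\tfrac\pi2 + \ii x$, reducing to them when $\alpha=1$. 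As a first sanity check one should verify that, for $\alpha=1$, this collapses---through the duality~\eqref{eq:duality}---to~\eqref{eq:theo:main} with the numbers $A[d,k]$, so that the $\alpha=1$ case of the theorem is exactly the dual restatement of Theorem~\ref{theo:exp_f_poi_zero_cell_fixed_k}.

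The heart of the argument is the asymptotics of $I_{d,k,\alpha}$ as $d\to\infty$ with $k,\alpha$ fixed. I would locate the critical point of the exponent $-(d+c_1)\log g_\alpha + (d-k)\log h_\alpha$. As already happens for $\alpha=1$, this saddle does not remain in the interior: it migrates, at rate $d^{-1/2}$, toward a singular point $x_0$ of the integrand at which $g_\alpha$ and $h_\alpha$ degenerate simultaneously (for $\alpha=1$ this is $x_0=\ii\pi/2$, where $\cosh x$ and $\tfrac\pi2+\ii x$ both vanish). Writing $x=x_0+w$ and expanding, the integrand factorizes locally as $w^{-\beta-1}\exp(-\kappa_\alpha d\,w^{2})\,(1+o(1))$, with branch exponent $\beta=\alpha k$ and curvature $\kappa_\alpha=(2+\tfrac4\alpha)^{-1}$ read off from the second-order Taylor coefficient of $\log g_\alpha$ at $x_0$. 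Deforming the contour into the steepest-descent path through $x_0$ and rescaling $w\mapsto w/\sqrt{\kappa_\alpha d}$ turns the local contribution into a Hankel-type integral $\int_{\cH} w^{-\beta-1}\eee^{-w^2}\,\dd w$, which evaluates to a \emph{reciprocal} Gamma function. This is the step that produces the factor $1/\Gamma(\tfrac{\alpha k}{2}+1)$, while the rescaling supplies the power $d^{\alpha k/2}$ together with the constant $(2+\tfrac4\alpha)^{-\alpha k/2}$.

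I would then feed this asymptotics back into the explicit formula and simplify the prefactor by Stirling's approximation. The factorial ratio analogous to $d!/(d-k)!$ contributes $d^{k}$, so the total exponent of $d$ becomes $k+\tfrac{\alpha k}{2}=k(1+\tfrac\alpha2)$, as claimed; the combinatorial factor yields $1/k!$; and the Beta-type normalization of the power-law intensity (equivalently, of the underlying beta$'$ density) contributes exactly $\bigl(\alpha\sqrt\pi\,\Gamma(\tfrac\alpha2)/\Gamma(\tfrac{\alpha+1}2)\bigr)^{k}=\bigl(\alpha B(\tfrac12,\tfrac\alpha2)\bigr)^{k}$, where $B$ is the Beta function. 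Collecting these three groups of factors against the $(2+\tfrac4\alpha)^{-\alpha k/2}$ and $1/\Gamma(\tfrac{\alpha k}2+1)$ from the integral reproduces the constant in Theorem~\ref{theo:Ef_Poi_poly_fixed_k}.

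I expect the saddle-point step to be the main obstacle. Because the saddle is not asymptotically separated from the singularity $x_0$---the distance and the Gaussian width are comparable powers of $d^{-1/2}$---a naive Laplace approximation yields the wrong constant and cannot produce a Gamma function at all. The delicate points are therefore the correct choice of the rescaled local variable, the rigorous justification of the contour deformation together with the negligibility of the contribution away from $x_0$, and the bookkeeping of the phase factors so that the Hankel integral reproduces $1/\Gamma(\tfrac{\alpha k}{2}+1)$ with the right real constant. A clean way to organize all of this is to isolate a single lemma giving the asymptotics of the $\alpha$-family of numbers $A_\alpha[d,k]$---the exact analogue of Theorem~\ref{theo:asympt_A_n_k}, which it should contain as the case $\alpha=1$---and then to deduce Theorem~\ref{theo:Ef_Poi_poly_fixed_k} as a purely algebraic corollary.
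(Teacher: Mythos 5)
Your proposal is correct and follows essentially the same route as the paper's own proof: the paper starts from the explicit formula $\E f_{k-1}(\conv \Pi_{d,\alpha})=\alpha^d\binom{d}{k}\bigl(\sqrt{\pi}\,\Gamma(\tfrac{\alpha}{2})/\Gamma(\tfrac{\alpha+1}{2})\bigr)^{k}T_{d,k}(\alpha)$, deforms the contour to one hugging the imaginary axis with a small half-loop around the point where the saddle and the singularity coalesce, rescales by $\sqrt{(\alpha+2)/(\alpha d)}$, and evaluates the resulting Hankel-type integral via the reciprocal-Gamma formula --- producing exactly your branch exponent $\alpha k$, curvature $(2+\tfrac{4}{\alpha})^{-1}$, and the factor $1/\Gamma(\tfrac{\alpha k}{2}+1)$. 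Even your proposed organization (a single lemma for the $\alpha$-family of integral asymptotics, from which the $\alpha=1$ case gives Theorem~\ref{theo:asympt_A_n_k} and the general case gives Theorem~\ref{theo:Ef_Poi_poly_fixed_k} algebraically) is precisely how the paper structures its argument around the asymptotics of $T_{d,k}(\alpha)$.
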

\begin{example}
For $\alpha = 1$ we recover Theorem~\ref{theo:exp_f_poi_zero_cell_fixed_k}.
For $\alpha = 2$, Theorem~2.7 in~\cite{kabluchko_formula} gives the exact formula
$$
\E f_{k-1}(\conv \Pi_{d,2}) = \binom dk \binom{d+k}{k} \sim \frac{d^{2k}}{(k!)^2},
$$
which is consistent with the above asymptotics.
\end{example}
\begin{remark}
In~\cite[Theorem~3.22(i)]{HoermannHugReitznerThaele}, a positive upper bound on $\limsup_{d\to\infty} \sqrt[d]{\E f_{k-1}(\conv \Pi_{d,\alpha})}$ was established. In fact, Theorem~\ref{theo:Ef_Poi_poly_fixed_k} implies that this $\limsup$ is a limit and  equals zero.
\end{remark}
\begin{open}
The asymptotics of the variance of $\E f_{k-1}(\conv \Pi_{d,\alpha})$ remains unknown. In particular, we do not know whether the $f$-vector is concentrated around its expectation, i.e.\ whether
\begin{equation}\label{eq:concentration_conj}
\frac{f_{k-1}(\conv \Pi_{d,\alpha})}{\E f_{k-1}(\conv \Pi_{d,\alpha})}  \overset{P}{\underset{d\to\infty}\longrightarrow} 1
\qquad
\text{ in probability.}
\end{equation}
\end{open}


\subsection{Speculations on neighborliness}
A polytope $P$ is called \textit{$k$-neighborly} if the convex hull of every $k$ vertices of $P$ defines a $(k-1)$-dimensional face of $P$; see~\cite[Chapter~7]{GruenbaumBook}. In other words, a polytope with $n$ vertices is $k$-neighborly if $f_{k-1}(P)$ attains its maximal possible value, i.e.\ $\binom nk$. In the work of Vershik and Sporyshev~\cite{vershik_sporyshev_asymptotic_faces_random_polyhedra1992} which was continued in a series of papers by Donoho and Tanner~\cite{donoho_neighborliness_proportional,donoho_tanner_neighborliness,donoho_tanner,donoho_tanner1}, it has been shown that \textit{random} polytopes have surprisingly strong neighborliness properties. These authors studied Gaussian polytopes (i.e.\ convex hulls of i.i.d.\ Gaussian random samples) or, which is essentially the same, random projections of regular simplices. More recently, a similar analysis has been performed for convex hulls of random walks~\cite{kabluchko_marynych_lah}.

Let us now make some speculations on the neighborliness properties of the polytopes $\conv \Pi_{d,\alpha}$. For simplicity, we restrict ourselves to the case $\alpha = 1$.  Taking $k=1$ in Theorem~\ref{theo:exp_f_poi_zero_cell_fixed_k} and recalling~\eqref{eq:duality} we obtain
$$
\E f_{0}(\conv \Pi_{d,1}) = \E f_{d-1}(\mathcal Z_d) \sim  \sqrt{\frac {2\pi}{3}} d^{3/2}.
$$
This  means that for arbitrary $k\in \N$ we can rewrite~\eqref{eq:Ef_k_alpha_1} as follows:
$$
\E f_{k-1}(\conv \Pi_{d,1}) \sim  \frac{(\E f_{0}(\conv \Pi_{d,1}))^k}{k!} \cdot p_k
\quad
\text{ with }
\quad
p_k := \frac{\pi^{k/2}}{2^k \Gamma(\frac k 2 + 1)}.
$$
If we ignore the expectations, then the first factor on the right-hand side would be the number of unordered $k$-tuples of vertices of $f_{0}(\conv \Pi_{d,1})$, up to asymptotic equivalence. By a strange coincidence, the constant $p_k$ on the right-hand side equals the volume of the $k$-dimensional ball of radius $1/2$. Since this ball can be inscribed into the unit cube, we have $p_k < 1$ for all $k\geq 2$. In particular, we conjecture that the polytope $\Pi_{d,\alpha}$ is not $k$-neighborly with probability converging to $1$ as $d\to\infty$.

\begin{conjecture}
Fix $k\in \{2,3,\ldots\}$ and sample $k$ vertices uniformly at random from the set of vertices of the random polytope $\conv \Pi_{d,1}$. Then, the probability that the simplex spanned by these vertices is a $(k-1)$-dimensional face of $\conv \Pi_{d,1}$ converges, as $d\to\infty$, to some limit $q_k\in (0,1)$. Moreover, if~\eqref{eq:concentration_conj} holds, we conjecture that $p_k = q_k$.
\end{conjecture}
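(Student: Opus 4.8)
The plan is to express the quantity of interest as an expectation of a bounded ratio and then reduce the conjecture to a concentration statement for the $f$-vector. Writing $N_d := f_0(\conv \Pi_{d,1})$ for the number of vertices, and using that $\conv \Pi_{d,1}$ is a.s.\ simplicial, so that every $(k-1)$-face is the convex hull of exactly $k$ vertices and distinct faces give distinct $k$-subsets, the probability that a uniformly chosen $k$-subset of vertices spans a $(k-1)$-face is
\begin{equation}\label{eq:qk_as_ratio}
q_k^{(d)} = \E\left[\frac{f_{k-1}(\conv \Pi_{d,1})}{\binom{N_d}{k}}\right].
\end{equation}
The integrand in~\eqref{eq:qk_as_ratio} takes values in $[0,1]$ (indeed $f_{k-1}\leq \binom{N_d}{k}$, with equality precisely under $k$-neighborliness), so the expectation is well defined and bounded; by Bolzano--Weierstrass the sequence $q_k^{(d)}$ automatically has subsequential limits in $[0,1]$, and the whole content of the conjecture is to pin these down to a single value lying in the \emph{open} interval and to identify it with $p_k$.

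The cleanest, and perhaps the only currently feasible, route I would take is through concentration. Suppose one can show that $f_0(\conv \Pi_{d,1})/\E f_0(\conv \Pi_{d,1}) \toP 1$ and $f_{k-1}(\conv \Pi_{d,1})/\E f_{k-1}(\conv \Pi_{d,1}) \toP 1$ as $d\to\infty$, i.e.\ that~\eqref{eq:concentration_conj} holds simultaneously for the indices $0$ and $k-1$. Since $\E f_0\to\infty$, concentration gives $N_d\to\infty$ and $\binom{N_d}{k} \sim N_d^k/k! \sim (\E f_0)^k/k!$ in probability, so dividing the two concentration statements shows that the bounded integrand in~\eqref{eq:qk_as_ratio} converges in probability to the constant
$$
\lim_{d\to\infty} \frac{\E f_{k-1}(\conv \Pi_{d,1})}{(\E f_0(\conv \Pi_{d,1}))^k/k!} = p_k,
$$
where the last equality is exactly the rewriting of Theorem~\ref{theo:exp_f_poi_zero_cell_fixed_k} via the duality~\eqref{eq:duality} recorded just above the conjecture. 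Bounded convergence then upgrades convergence in probability of the integrand to $q_k^{(d)}\to p_k$, which simultaneously proves existence of the limit, the identity $q_k = p_k$, and membership in $(0,1)$: since $p_k = \pi^{k/2}/(2^k \Gamma(\tfrac k2+1))$ is the volume of a $k$-ball of radius $1/2$, one has $0 < p_k < 1$ for every $k\geq 2$ (while $p_1 = 1$, explaining the restriction $k\geq 2$).

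It therefore remains to establish the concentration~\eqref{eq:concentration_conj}, and this is where the real difficulty lies. The functionals $f_0$ and $f_{k-1}$ are local Poisson functionals: whether $k$ given atoms span a face of $\conv \Pi_{d,1}$ depends only on those atoms together with the event that one of the two open half-spaces bounded by their affine hull is free of further atoms. This locality makes the Poincar\'e (Efron--Stein) inequality for Poisson functionals, $\Var F \le \E \int (D_x F)^2\,\Pi_{d,1}(\dd x)$ with $D_x$ the add-one-cost operator, the natural tool; the goal would be to bound this first-order difference operator and show the resulting variance is of smaller order than the square of the mean. The genuine obstacle is that the relevant limit is the high-dimensional one $d\to\infty$ rather than a large-sample limit, so the only source of averaging is the polynomial growth $\E f_0 \sim \sqrt{2\pi/3}\,d^{3/2}$; moreover the intensity $\|x\|^{-d-1}$ is non-integrable at the origin, so the add-one-cost integral must be controlled carefully near $0$, and the add-one cost for faces is itself a delicate high-dimensional quantity. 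Absent such a variance bound, only the soft compactness argument of the first paragraph is available, which yields subsequential limits in $[0,1]$ without identifying them or excluding the endpoints $\{0,1\}$; this is precisely why the statement is recorded as a conjecture and why the variance asymptotics are flagged as the preceding open problem.
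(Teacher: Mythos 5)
The statement you were asked to prove is recorded in the paper as a \emph{conjecture}: the paper contains no proof of it, only the motivating computation immediately preceding it, namely the rewriting $\E f_{k-1}(\conv \Pi_{d,1}) \sim \frac{(\E f_{0}(\conv \Pi_{d,1}))^k}{k!}\, p_k$ with $p_k = \pi^{k/2}/(2^k \Gamma(\frac k2 + 1))$, together with the observation that $p_k<1$ for $k\geq 2$ because $p_k$ is the volume of a $k$-dimensional ball of radius $1/2$ inscribed in the unit cube. So there is no paper proof to compare against, and no complete proof should be expected from you either.

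Within that constraint, your proposal is sound and goes slightly beyond what the paper's text establishes. The identity $q_k^{(d)} = \E\bigl[f_{k-1}(\conv \Pi_{d,1})/\binom{N_d}{k}\bigr]$ is correct and rests on exactly the right ingredient (a.s.\ simpliciality of $\conv\Pi_{d,1}$, so distinct $(k-1)$-faces correspond to distinct $k$-subsets of vertices, whence $f_{k-1}\leq \binom{N_d}{k}$). Your conditional argument --- concentration at indices $0$ and $k-1$ together with $\E f_0\to\infty$ gives convergence in probability of the bounded integrand to $p_k$, and bounded convergence then yields $q_k^{(d)}\to p_k\in(0,1)$ for $k\geq 2$ --- is a correct proof of the implication that the paper itself only \emph{conjectures} in its second sentence (``if \eqref{eq:concentration_conj} holds, then $p_k=q_k$''), and your arithmetic identifying the limit with $p_k$ via Theorem~\ref{theo:exp_f_poi_zero_cell_fixed_k} and \eqref{eq:duality} checks out. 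Two caveats are worth making explicit. First, you need \eqref{eq:concentration_conj} at two indices simultaneously (dimensions $0$ and $k-1$), a mild strengthening of the displayed statement for a single $k$, though clearly within the paper's intent. Second, the first sentence of the conjecture --- unconditional convergence of $q_k^{(d)}$ to a limit in the \emph{open} interval $(0,1)$ --- is not established by your argument: compactness only yields subsequential limits in $[0,1]$, and you correctly identify the missing ingredient as a variance bound, which is precisely the open question recorded in the paper just before the conjecture. In short, you have not proved the conjecture (nobody has), but you have correctly reduced its conditional part to the concentration hypothesis and verified the identification $q_k=p_k$ rigorously under that hypothesis, which is consistent with, and marginally sharper than, what the paper offers.
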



\subsection{Asymptotics in the linearly growing \texorpdfstring{$k$}{k} regime}
To motivate what follows, let us recall that the $f$-vectors of the $d$-dimensional cube $[0,1]^d$ and the $d$-dimensional simplex $S_d$  are given by
$$
f_k([0,1]^d)
=
2^{d-k} \binom{d}{k}
,
\qquad
f_k(S_d)
=
\binom {d+1}{k+1},
\qquad k\in \{0,\ldots,d\}.
$$
Stirling's formula implies that for $k\sim \lambda d$ with fixed $\lambda \in (0,1)$ we have
\begin{align*}
&\lim_{d\to\infty} \frac 1d \log f_k ([0,1]^d) = (\log 2)(1-\lambda) - \lambda \log \lambda - (1-\lambda)\log (1-\lambda),
\\
&\lim_{d\to\infty} \frac 1d \log  f_k (S_d) = - \lambda \log \lambda - (1-\lambda)\log (1-\lambda).
\end{align*}
Vershik and Sporyshev~\cite{vershik_sporyshev_asymptotic_faces_random_polyhedra1992} proved a similar result, with a different limit function,  for expected face numbers of Gaussian polytopes. It is natural to conjecture that many other random and non-random high-dimensional polytopes should exhibit a similar type of behavior.  The next result confirms this for Poisson zero cells.
\begin{theorem}\label{theo:Ef_Poi_poly_linear_k}
Let $k= k(d) \sim \lambda d$ as $d\to\infty$, where $\lambda\in (0,1)$ is fixed. Then,
\begin{multline}\label{eq:exp_profile_poi_poly}
\lim_{d\to\infty} \frac 1d \log \E f_{k-1} (\conv \Pi_{d,1})
=
\lim_{d\to\infty} \frac 1d \log \E f_{d-k} (\mathcal Z_d)
\\
=
\lambda \log \pi - (\lambda \log \lambda + (1-\lambda)\log (1-\lambda)) + (1-\lambda) \log \psi(\lambda) - \log \sin \psi(\lambda),
\end{multline}
where $\psi(\lambda)$ is the unique solution to the equation $1-\lambda = \psi(\lambda) \cot \psi(\lambda)$ in the interval $(0,\frac \pi 2)$.
\end{theorem}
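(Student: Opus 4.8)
The plan is to reduce everything to the asymptotics of the integral in~\eqref{eq:A_n_k_def} and to locate its relevant saddle point. First, duality~\eqref{eq:duality}, applied with $k$ replaced by $d-k$, gives $\E f_{k-1}(\conv\Pi_{d,1}) = \E f_{d-k}(\mathcal Z_d)$, so the two quantities in~\eqref{eq:exp_profile_poi_poly} coincide as sequences and it suffices to treat $\E f_{d-k}(\mathcal Z_d)$. Specializing~\eqref{eq:theo:main} to $\ell=d-k$ and inserting the definition~\eqref{eq:A_n_k_def} of $A[d,k]$ (with $n=d$), the factorial prefactor $d!/(d-k)!$ combines with $1/k!$ into a binomial coefficient, yielding the exact identity
\[
\E f_{d-k}(\mathcal Z_d) = \pi^k \binom dk I_d,
\qquad
I_d := \frac 1\pi \int_{-\infty}^{+\infty} (\cosh x)^{-d-1}\Bigl(\tfrac\pi2 + \ii x\Bigr)^{d-k}\dd x.
\]
Taking $\tfrac1d\log$ and using the elementary entropy asymptotics $\tfrac1d\log\binom dk \to -\lambda\log\lambda-(1-\lambda)\log(1-\lambda)$ (Stirling) together with $\tfrac kd\log\pi\to\lambda\log\pi$, the theorem reduces to showing $\tfrac1d\log I_d \to (1-\lambda)\log\psi(\lambda)-\log\sin\psi(\lambda)$.

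For the integral I would set $\beta_d := (d-k)/d \to 1-\lambda$ and write the integrand as $\exp(d\,\phi_{\beta_d}(x))$ up to a bounded factor, where $\phi_\beta(x) := -\log\cosh x + \beta\log(\tfrac\pi2 + \ii x)$. Since $d-k$ is a nonnegative integer, $(\tfrac\pi2+\ii x)^{d-k}$ is entire and the integrand is meromorphic with poles only at $x\in\ii\pi(\Z+\tfrac12)$, the nearest lying at $\pm\ii\tfrac\pi2$. Solving the saddle equation $\phi_\beta'(x)=-\tanh x + \ii\beta/(\tfrac\pi2+\ii x)=0$ along the imaginary axis $x=\ii t$ turns it into $\tan t = \beta/(\tfrac\pi2 - t)$; with $s=\tfrac\pi2 - t$ this reads $s\cot s = \beta$, so the saddle sits at $x_0 = \ii(\tfrac\pi2 - \psi)$, where $\psi=\psi(\lambda)$ is exactly the root of $\psi\cot\psi = 1-\lambda$ in $(0,\tfrac\pi2)$. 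Using $\cosh x_0=\sin\psi$ and $\tfrac\pi2+\ii x_0=\psi$ gives the real value $\phi_{1-\lambda}(x_0) = (1-\lambda)\log\psi - \log\sin\psi$, which is the target.

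To turn this into a rigorous rate, I would deform the real line to the horizontal contour $\{\,u+\ii(\tfrac\pi2-\psi):u\in\bR\,\}$; this crosses no pole because $\tfrac\pi2-\psi<\tfrac\pi2$, and the deformation is legitimate since $(\cosh x)^{-d-1}$ decays like $e^{-(d+1)|\Re x|}$ across the strip while the polynomial factor grows only polynomially. On this contour the normalized log-modulus of the integrand converges to
\[
g(u) := \Re\,\phi_{1-\lambda}\bigl(u+\ii(\tfrac\pi2-\psi)\bigr) = -\tfrac12\log(\sin^2\psi+\sinh^2 u) + \tfrac{1-\lambda}2\log(\psi^2+u^2),
\]
an even function with $g(0)=\phi_{1-\lambda}(x_0)$. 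A direct computation gives $g'(0)=0$ and $g''(0) = (1-\lambda)/\psi^2 - 1/\sin^2\psi = (\sin\psi\cos\psi-\psi)/(\psi\sin^2\psi) < 0$ (using $1-\lambda=\psi\cot\psi$ and $\sin2\psi<2\psi$), so $u=0$ is a strict local maximum; combined with $g(u)\to-\infty$ as $|u|\to\infty$ this identifies $g(0)$ as the global maximum. The upper bound $\tfrac1d\log I_d \le g(0)+o(1)$ then follows by estimating $|I_d|$ by $\tfrac1\pi\int e^{d g_d(u)}\dd u$ and splitting into a neighbourhood of $0$ and an exponentially small tail, while the matching lower bound comes from a standard Laplace expansion near $x_0$, where the integrand is real and positive so that no cancellation occurs. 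Assembling the three contributions yields~\eqref{eq:exp_profile_poi_poly}.

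I expect the main obstacle to be the global domination step, namely proving $g(u)\le g(0)$ for all $u$ rather than merely locally, together with controlling the mild $d$-dependence of $\beta_d$ so that the saddle for $\beta_d$ converges to $x_0$ and the estimates hold uniformly. The favourable feature that makes this tractable is that $\lambda\in(0,1)$ keeps $\psi(\lambda)$ bounded away from $0$, so the saddle $x_0$ stays strictly below the singularity at $\ii\tfrac\pi2$ and is nondegenerate; the degenerate boundary case $\lambda\downarrow0$, where $\psi\to0$ and the saddle collides with the pole, is precisely the delicate fixed-$k$ regime already handled in Theorem~\ref{theo:asympt_A_n_k}.
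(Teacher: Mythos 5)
Your proposal is, in substance, the same proof as the paper's: the identity $\E f_{d-k}(\mathcal Z_d)=\pi^k\binom dk I_d$, the entropy term from Stirling, and a saddle-point analysis of the remaining integral. Your coordinates differ from the paper's only by the substitution $z=\frac\pi2+\ii x$, under which your horizontal contour $\Im x=\frac\pi2-\psi$ becomes the paper's vertical line $\Re z=\psi$ and your saddle $x_0=\ii(\frac\pi2-\psi)$ becomes the paper's saddle $z=\psi(k/d)$. There is, however, one genuine gap, located exactly where you suspect it. The inference ``$u=0$ is a strict local maximum of $g$, and $g(u)\to-\infty$ as $|u|\to\infty$, hence $g(0)$ is the global maximum'' is not valid: a function can have a strict local maximum at the origin, decay at infinity, and still exceed $g(0)$ at an intermediate secondary bump. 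Without the global bound $g(u)\le g(0)$ for all $u$ (and its analogue for the prelimit exponent $\beta_d=(d-k)/d$ with saddle $\psi(k/d)$, so that the estimates are uniform for large $d$), the saddle-point method only controls the contribution near $u=0$ and gives the lower bound; the matching upper bound on $\frac 1d\log|I_d|$ is exactly what is missing, and this is the central analytic step of the theorem. The paper closes this gap with Lemma~\ref{lem:saddle_point_justify}: for $z=x+\ii y$ with $0<x<\pi$ and $\mu\in(0,1)$ one has $|z^{1-\mu}/\sin z|\le x^{1-\mu}/\sin x$, proved via the product formula $\sin z=z\prod_{n\ge1}\bigl(1-\frac{z^2}{n^2\pi^2}\bigr)$ and the elementary inequalities $|1\pm z_n|\ge 1\pm x_n$.

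The good news is that in your parametrization the gap can be repaired in two lines, so your outline does lead to a complete proof. The inequality $g(u)\le g(0)$ is equivalent, after exponentiating and rearranging, to
\begin{equation*}
(1-\lambda)\log\Bigl(1+\tfrac{u^2}{\psi^2}\Bigr)\;\le\;\log\Bigl(1+\tfrac{\sinh^2 u}{\sin^2\psi}\Bigr),
\end{equation*}
and this follows from $\sinh^2 u\ge u^2$, $\sin\psi\le\psi$ (whence $1+\sinh^2u/\sin^2\psi\ge 1+u^2/\psi^2$), and $0<1-\lambda<1$ together with the nonnegativity of $\log(1+u^2/\psi^2)$; the inequality is strict for $u\neq0$ since $\sinh^2u>u^2$. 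Running the same computation with $\mu=k/d$ in place of $\lambda$ and $\psi(k/d)$ in place of $\psi(\lambda)$ gives the required domination for the actual integrand at finite $d$, which is all that the standard saddle-point theorem invoked by the paper needs. With that inserted, your argument is correct and coincides with the paper's; the only remaining (minor) difference is stylistic, namely whether one verifies the global maximum property by this direct estimate or by the sine product formula as in Lemma~\ref{lem:saddle_point_justify}.
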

We do not know whether the expectations can be removed in the above result, i.e.\ whether $\frac 1d \log f_{k-1} (\conv \Pi_{d,1})$ converges to the same limit in probability. The plot of the ``exponential profile'' appearing on the right-hand side of~\eqref{eq:exp_profile_poi_poly} is shown on Figure~\ref{fig:exp_profile}. The maximum of the profile is attained at $\lambda_* \approx 0.699155$. Hence, on average, most faces of the high-dimensional Poisson zero cell have dimensions close to $\lambda_* d$, but it remains unclear whether the same conclusion remains true with high probability. Theorem~\ref{theo:Ef_Poi_poly_linear_k} could be generalized to $\conv \Pi_{d,\alpha}$ with arbitrary $\alpha>0$, but the limit function is ugly and we refrain from stating this generalization.

\begin{figure}[t]
	\centering
	\includegraphics[width=0.4\columnwidth]{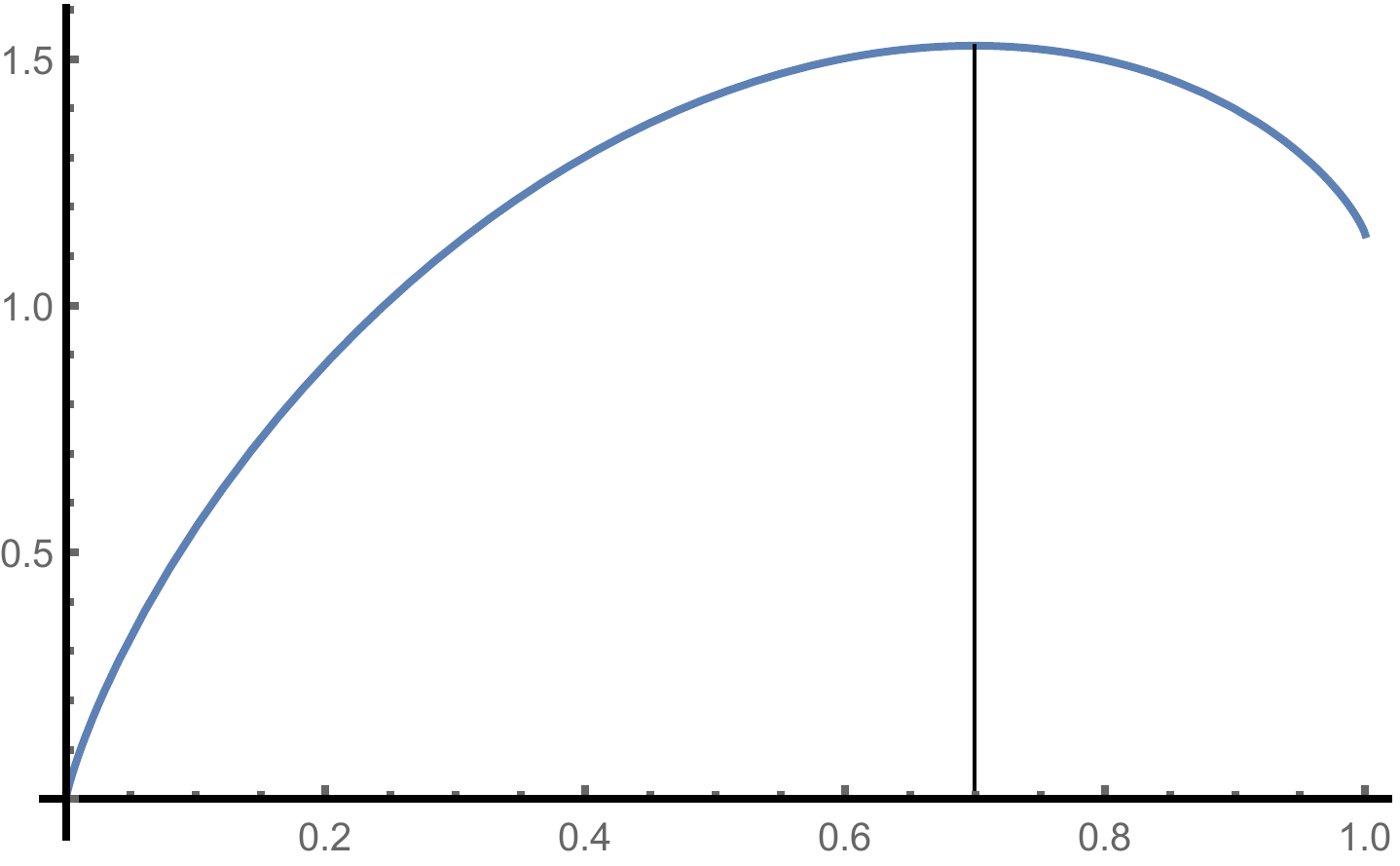}
	\caption{The function on the right-hand side of~\eqref{eq:exp_profile_poi_poly} and its maximizer.}
\label{fig:exp_profile}
\end{figure}

\subsection{Asymptotics in the regime with fixed \texorpdfstring{$d-k$}{d-k}}
Let us finally briefly mention results which are known in the regime when $k= d-\ell$ with fixed $\alpha>0$ and $\ell \in \N$.
Theorems~1.2 and~3.21(ii) of~\cite{HoermannHugReitznerThaele} state that
$$
\lim_{d\to\infty}\sqrt[d]{\E f_{d-\ell}(\conv \Pi_{d,\alpha})} = \frac {\sqrt{\pi}\alpha\Gamma(\frac \alpha 2)}{\Gamma(\frac{\alpha+1}{2})}.
$$
A more refined result, obtained in Theorems 1.23 and 1.25 of~\cite{beta_polytopes}, states that
$$
\E f_{d-\ell-1} (\conv \Pi_{d,\alpha}) \sim
\frac {\sqrt{\alpha}} {2^{\ell-{\frac 1 2}}} \left( \frac{\Gamma(\frac \alpha 2)} {\Gamma(\frac{\alpha+1}{2})}\right)^{d} \frac {(\sqrt{\pi}\,\alpha)^{d-1}} {\ell!} d^{\ell-{\frac 1 2}},
\qquad
d\to\infty.
$$
It follows that
$$
\frac{\E f_{d-\ell-1} (\conv \Pi_{d,\alpha})}{\E f_{d-1} (\conv \Pi_{d,\alpha})} \sim  \frac{(d/2)^\ell}{\ell!},
\qquad
d\to\infty.
$$
The same behavior is exhibited by the $f$-vector of a $d$-dimensional crosspolytope $C_d$, given by $f_{d-\ell-1}(C_d) =  2^{d-\ell} \binom d\ell$, although we have no explanation for this coincidence.

\subsection{The typical cell of the Poisson hyperplane tessellation}
Let us conclude by stating some conjectures on the \textit{typical cell} of the isotropic and stationary Poisson hyperplane tessellation, which is a rare example for which explicit results on \textit{second} moments are available thanks to a formula proved in the paper of Schneider~\cite[Theorem 1.2]{schneider_second_moments} (which also contains a discussion of the history of this formula with a reference to the work of Miles~\cite{miles_thesis,miles_synopsis}).
Informally, the typical cell is a random polytope picked ``uniformly at random'' from the infinite collection of cells of the Poisson hyperplane tessellation, where all cells have the same chance to be picked regardless of their volume.  For a precise definition using the Palm calculus we refer to~\cite{SW08}. The zero cell studied above is a volume-biased version of the typical cell~\cite[Theorem 10.4.1]{SW08}.
It is known~\cite[Theorems~10.3.1 and~10.3.2]{SW08} that the expected $f$-vector of $\mathcal Z_d^{\text{typ}}$ is the same as the $f$-vector of the cube, that is
$$
\E f_k(\mathcal Z_d^{\text{typ}}) = 2^{d-k} \binom{d}{k},
\qquad
k\in \{0,\ldots,d\}.
$$
Regarding the second moment of the number of vertices, Schneider~\cite{schneider_second_moments} (following  Miles~\cite{miles_thesis,miles_synopsis}) proved that
\begin{equation}\label{eq:sec_moment_explicit}
\E f_0^2(\mathcal Z_d^{\text{typ}}) = 2^d d! \sum_{j=0}^d \frac{\kappa_j^2}{4^j (d-j)!},
\end{equation}
where $\kappa_j := \pi^{j/2}/\Gamma (\frac j2 +1)$ is the volume of the $j$-dimensional unit ball. An asymptotic analysis of this expression as $d\to\infty$ will be carried out in Section~\ref{subsec:proof_asympt_sec_moments} and yields the following
\begin{proposition}\label{prop:asympt_sec_moment}
We have $\lim_{d\to\infty} \frac 1d \log \E f_0^2(\mathcal Z_d^{\text{typ}}) = \log (\pi + 2)$.
\end{proposition}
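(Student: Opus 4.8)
The plan is to carry out a Laplace-type (large deviations) analysis of the explicit sum in~\eqref{eq:sec_moment_explicit}. First I would write $\E f_0^2(\mathcal Z_d^{\text{typ}}) = \sum_{j=0}^d a_j$ with $a_j := 2^d d!\, \kappa_j^2/(4^j (d-j)!)$, recalling $\kappa_j = \pi^{j/2}/\Gamma(\frac j2+1)$. Since this is a sum of $d+1$ positive terms, its exponential growth rate is governed by the largest summand: from the sandwich $\max_{0\le j\le d} a_j \le \sum_{j=0}^d a_j \le (d+1)\max_{0\le j\le d} a_j$ together with $\frac1d\log(d+1)\to 0$, we obtain $\frac1d\log \E f_0^2(\mathcal Z_d^{\text{typ}}) = \frac1d\max_{0\le j\le d}\log a_j + o(1)$.

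The next step is to evaluate $\frac1d\log a_j$ for $j=\lfloor xd\rfloor$ with fixed $x\in[0,1]$, via Stirling's formula in the forms $\frac1d\log d! = \log d - 1 + o(1)$, $\frac2d\log\Gamma(\frac j2+1) = x\log(xd) - x\log 2 - x + o(1)$, and $\frac1d\log(d-j)! = (1-x)\log((1-x)d) - (1-x) + o(1)$. The key bookkeeping point is that the three $\log d$ contributions (from $d!$, from $\Gamma(\frac j2+1)^2$, and from $(d-j)!$) cancel exactly, as do the additive constants; after simplification one is left with $\frac1d\log a_{\lfloor xd\rfloor} = \varphi(x) + o(1)$, where $\varphi(x) := (1-x)\log2 + x\log\pi - x\log x - (1-x)\log(1-x)$, the convergence being uniform on compact subsets of $(0,1)$.

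It then remains to maximize $\varphi$ over $[0,1]$. Because the entropy part $-x\log x-(1-x)\log(1-x)$ is strictly concave and the other terms are linear, $\varphi$ is concave, and solving $\varphi'(x) = -\log2 + \log\pi - \log x + \log(1-x) = 0$ gives the unique interior critical point $x_* = \pi/(\pi+2)$, with $1-x_* = 2/(\pi+2)$. Substituting these values, the $x\log\pi$ and the $(1-x)\log2$ terms cancel against the entropy terms, leaving $\varphi(x_*) = \log(\pi+2)$; one checks that this exceeds the boundary values $\varphi(0)=\log2$ and $\varphi(1)=\log\pi$, so $x_*$ is the global maximizer. Combining this with the reduction to $\frac1d\max_j\log a_j$ and noting that this discrete maximum converges to $\sup_{x\in[0,1]}\varphi(x)=\log(\pi+2)$ (the points $j/d$ are dense, $\varphi$ is continuous, and the maximizer is interior) yields the claim.

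The hard part will be purely one of care rather than of ideas: tracking the exact cancellation of the $\log d$ terms across the factors $2^d$, $d!$, $\Gamma(\frac j2+1)^2$ and $(d-j)!$, and upgrading the pointwise Stirling estimate to the uniform control near $x_*$ needed to identify $\frac1d\max_j\log a_j$ with $\sup_x\varphi(x)$. Everything else is a routine concavity and critical-point computation.
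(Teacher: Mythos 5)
Your proposal is correct and follows essentially the same route as the paper: the same max-term sandwich $\max_{0\le j\le d} a_{d,j} \le \sum_{j=0}^d a_{d,j} \le (d+1)\max_{0\le j\le d} a_{d,j}$, the same Stirling evaluation of $\frac1d\log a_{d,\lfloor xd\rfloor}$, and the same maximization yielding $x_*=\pi/(\pi+2)$ and the value $\log(\pi+2)$ (your profile $\varphi$ coincides with the paper's $I(\lambda)=\log 2+\lambda\log\frac{\pi}{2}-\lambda\log\lambda-(1-\lambda)\log(1-\lambda)$ after simplification). One small correction to your final paragraph: the uniform control is needed for the upper half of the sandwich over \emph{all} $j\in\{0,\dots,d\}$, including $j/d$ near the endpoints, not merely near $x_*$ or on compacts of $(0,1)$; the paper secures this from the estimate $\log\Gamma(x+1)=x\log x-x+o(d)$ uniformly over $x\in[0,d]$, which is exactly the form of Stirling you already invoke, so the gap closes immediately.
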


So, we have $\E f_0(\mathcal Z_d^{\text{typ}}) = 2^d$ and  $\E f_0^2(\mathcal Z_d^{\text{typ}}) = (\pi + 2 + o(1))^d$. This allows us to speculate about the possible distributional limit behavior of $f_0(\mathcal Z_d^{\text{typ}})$ as $d\to\infty$. If we assume that $f_0(\mathcal Z_d^{\text{typ}}) / \E f_0(\mathcal Z_d^{\text{typ}})$ converges  in distribution to $1$ (or, more generally, to some random variable $W>0$) together with all moments, then we would have $\E f_0^2(\mathcal Z_d^{\text{typ}}) \sim \text{const} \cdot (\E f_0(\mathcal Z_d^{\text{typ}}))^2$, which is evidently a contradiction.
On the other hand, the following type of behaviour  is consistent with the known results on the first two moments.

\begin{conjecture}\label{conj:LDP}
As $d\to\infty$, the sequence of random variables $(\frac 1d \log f_0(\mathcal Z_d^{\text{typ}}))_{d\in \N}$ satisfies a large deviation principle on $\R$ with speed $d$ and certain non-degenerate rate function.
\end{conjecture}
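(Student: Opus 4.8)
The natural strategy is to establish the large deviation principle via the Gärtner--Ellis theorem, which reduces the problem to the existence and regularity of the scaled cumulant generating function
\[
\Lambda(\lambda) := \lim_{d\to\infty} \frac 1d \log \E\!\left[\bigl(f_0(\mathcal Z_d^{\text{typ}})\bigr)^{\lambda}\right], \qquad \lambda\in \R,
\]
which is exactly $\lim_{d\to\infty}\frac1d\log\E[e^{\lambda d X_d}]$ for $X_d=\frac1d\log f_0(\mathcal Z_d^{\text{typ}})$. If this limit exists on an open interval containing $[0,\infty)$ and $\Lambda$ is finite and essentially smooth (differentiable with the steepness property at the boundary of its domain), then Gärtner--Ellis delivers an LDP at speed $d$ with convex rate function $I=\Lambda^*$, the Legendre--Fenchel transform. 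The presently available data are strongly consistent with this picture: trivially $\Lambda(0)=0$, the identity $\E f_0(\mathcal Z_d^{\text{typ}})=2^d$ gives $\Lambda(1)=\log 2$, and Proposition~\ref{prop:asympt_sec_moment} gives $\Lambda(2)=\log(\pi+2)$. Since the three points $(0,0)$, $(1,\log 2)$, $(2,\log(\pi+2))$ are not collinear (as $\pi+2\neq 4$), the limiting cumulant generating function cannot be affine, so the rate function is automatically \emph{non-degenerate}; in particular $f_0/\E f_0$ does not concentrate, in agreement with the discussion preceding the conjecture.

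First I would compute $\Lambda$ along the integers $\lambda=m\in\N$ by deriving a closed formula for the moments $\E[f_0^m]$. Writing $f_0(\mathcal Z_d^{\text{typ}})$ as a sum over the $d$-element families of hyperplanes whose intersection point is a vertex of the typical cell, the $m$-th power expands into a sum over $m$-tuples of such families; the multivariate Mecke--Slivnyak formula for the Poisson hyperplane process then turns $\E[f_0^m]$ into an integral over configurations of at most $md$ hyperplanes, organized by the combinatorial type (the nerve) describing how the $m$ vertex-defining families overlap. For $m=1$ this must reproduce $\E f_0=2^d$, and for $m=2$ it must recover Schneider's formula~\eqref{eq:sec_moment_explicit}, in which the overlap is indexed by the dimension $j$ of the shared hyperplane family and each term carries the integral-geometric weight $\kappa_j^2/4^j$. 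The plan is then to extract the $d\to\infty$ asymptotics of each such formula by the same Laplace/saddle-point technique used to prove Proposition~\ref{prop:asympt_sec_moment}, identify the dominant overlap pattern, and read off $\Lambda(m)$; knowing $\Lambda$ on all integers, convexity together with an interpolation/continuity argument would then pin it down on all of $[0,\infty)$.

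The main obstacle is precisely the derivation of the higher moment formulas. The second moment is accessible only because of a special integral-geometric identity going back to Miles and Schneider~\cite{schneider_second_moments,miles_thesis,miles_synopsis}; no analogue for $m\geq 3$ is currently known, and the complexity of the overlap patterns (the nerves of $m$ distinct $d$-subsets of a hyperplane arrangement) grows rapidly with $m$. Producing an expression for these moments that is tractable enough to admit a saddle-point analysis with an identifiable exponential rate is the crux of the matter, and is the reason the statement is posed as a conjecture rather than a theorem.

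Finally, two gaps would persist even granting the moment asymptotics. One must control the lower tail: the bound $f_0(\mathcal Z_d^{\text{typ}})\geq d+1$ (every $d$-dimensional polytope has at least $d+1$ vertices) forces $\frac1d\log f_0\geq \frac1d\log(d+1)\to 0$, so the effective support is $[0,\infty)$ and the rate function should be $+\infty$ on $(-\infty,0)$ with its unique zero at the exponential growth rate of a typical $f_0$ (namely $\Lambda'(0)$); pinning this down requires either $\Lambda(\lambda)$ for some $\lambda<0$ or a separate lower-tail large-deviation upper bound. One must also verify the regularity hypotheses (finiteness on an open interval and steepness) needed to invoke Gärtner--Ellis and to guarantee that $I=\Lambda^*$ is a genuine lower-semicontinuous, non-degenerate rate function; a failure of essential smoothness would at best yield the large-deviation upper bound and would demand a supplementary argument for the matching lower bound.
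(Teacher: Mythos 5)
This statement is posed in the paper as a conjecture, not a theorem: the paper contains no proof of it, only the heuristic discussion of the first two moments ($J^*(1)=\log 2$, $J^*(2)=\log(\pi+2)$, the non-concentration argument, and the analogy with the random energy model of~\cite{fedrigo_etal}). Your proposal is essentially a formalization of that same heuristic into a G\"artner--Ellis program, and to your credit you state explicitly that it is a plan rather than a proof. So the honest verdict is: the proposal does not prove the statement, and it could not be expected to, but it correctly identifies where the difficulty lies. The crux you name --- that no analogue of the Miles--Schneider second-moment identity~\eqref{eq:sec_moment_explicit} is known for moments of order $m\geq 3$, so the scaled cumulant generating function $\Lambda$ cannot currently be computed beyond $\lambda\in\{0,1,2\}$ --- is exactly the obstruction, and your observation that the three known points $(0,0)$, $(1,\log 2)$, $(2,\log(\pi+2))$ are non-collinear is a correct and clean way to see that any limiting rate function must be non-degenerate, matching the paper's own contradiction argument against concentration.

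Two further gaps in your outline deserve emphasis beyond what you wrote. First, even granting closed formulas and asymptotics for all integer moments, knowing $\Lambda$ at the integers does not determine it on $[0,\infty)$: each prelimit function $\lambda\mapsto\frac1d\log\E[f_0^\lambda]$ is convex, so subsequential limits are convex and agree at integers, but distinct subsequences could still disagree between integers; your ``convexity plus interpolation'' step needs an actual argument (e.g.\ existence of the limit for all real $\lambda$, or a priori equicontinuity). Second, the paper's own cited analogy with the random energy model is a warning against the G\"artner--Ellis route rather than support for it: in the REM the limiting cumulant generating function has an affine piece (a freezing transition), essential smoothness fails, and the LDP lower bound on non-exposed points requires a separate, model-specific argument. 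If $f_0(\mathcal Z_d^{\text{typ}})$ behaves analogously --- with high moments dominated by exponentially rare cells having abnormally many vertices --- then the moment route alone yields only the upper bound, and the full LDP would need a direct probabilistic construction of the rare events realizing each value of $\frac1d\log f_0$. None of this makes your plan wrong; it makes it a research program, which is precisely why the paper states the result as a conjecture.
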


Indeed, if we denote the rate function by $J(x)$, then the large deviation principle states that, informally speaking,
$$
\P[\log f_0(\mathcal Z_d^{\text{typ}}) \approx d \cdot x] \approx \eee^{-d J(x)},
\qquad
x\geq 0.
$$
This suggests that for every $m>0$, the $m$-th moment of $f_0(\mathcal Z_d^{\text{typ}})$ should satisfy
$$
\E f_0^m (\mathcal Z_d^{\text{typ}}) = \E \eee^{md \cdot \frac 1d \log f_0(\mathcal Z_d)} \approx \eee^{d J^*(m)},
$$
where $J^*(m) = \sup_{x\geq 0} (mx - J(x))$ is the Legendre-Fenchel transform of $J$. The special cases $m=1$ and $m=2$ discussed above suggest that   $J^*(1)=\log 2$ and $J^*(2) = \log (\pi + 2)$. A behaviour similar to the one suggested in Conjecture~\ref{conj:LDP} is known for the random energy model~\cite{fedrigo_etal} and is accompanied by a similar behaviour of moments. Conjecture~\ref{conj:LDP} naturally suggests a central  limit theorem of the form
$$
\frac {\log f_0(\mathcal Z_d^{\text{typ}}) - \E \log f_0(\mathcal Z_d^{\text{typ}})}{\sqrt d}
{\overset{d}{\underset{d\to\infty}\longrightarrow}}
 N(0,\sigma^2),
$$
where $\sigma^2$ is certain unknown variance.

\section{Proofs}
\subsection{Proof of Theorem~\ref{theo:Ef_Poi_poly_fixed_k}}\label{subsec:proof_Ef_poi_polyhedron_fixed_k}
The starting point of the proof is the following explicit formula which can be found in Equation~(6.32) of~\cite{kabluchko_formula}:
\begin{equation}\label{eq:E_f_k_formula_explicit}
\E f_{k-1}(\conv \Pi_{d,\alpha})
=
\alpha^d \binom dk \left( \frac{\sqrt \pi \Gamma (\frac \alpha 2)}{\Gamma(\frac {\alpha+1}{2})} \right)^{k} T_{d,k}(\alpha).
\end{equation}
Here, the term $T_{d,k}(\alpha)$ is given by
\begin{equation}\label{eq:T_d_k_asympt}
T_{d,k}(\alpha) =  \frac 1 \pi \int_{-\infty}^{+\infty} \frac{\tilde F(\ii u)^{d-k}}{(\cosh u)^{\alpha d + 1}} \dint u,
\end{equation}
where, by Equation~(6.1) of~\cite{kabluchko_formula},
$$
\tilde F(\ii u) = \int_{-\pi/2}^{\ii u} (\cos y)^{\alpha-1} \dint y,
\qquad
\ii u \in \mathcal D,
$$
and $\mathcal D$ is the complex plane with two cuts at $(-\infty, -\frac \pi 2]$ and $[+\frac \pi 2, +\infty)$.
The integral in the definition of the function $\tilde F$ is taken over some contour connecting $-\frac \pi 2$ to $\ii u$ and staying in the domain $\mathcal D$.  Note that the ramification points of the multivalued function $(\cos y)^{\alpha-1}$ are located at $\frac \pi 2 + \pi n$, $n\in \Z$,  hence this function has a well-defined branch  in $\mathcal D$ characterized by the condition $(\cos 0)^{\alpha-1} = 1$.
We start by making the substitution $u=-\ii v + \ii \frac \pi 2$ (meaning that  $v = \ii u + \frac {\pi}{2}$), which transforms the integral to
\begin{equation}\label{eq:int_T_d_k}
T_{d,k}(\alpha) = \frac {1}{\pi \ii}\int_{-\ii \infty + \frac {\pi}{2}}^{+\ii \infty + \frac {\pi}{2}} \frac{\left(\int_{0}^{v}(\sin z)^{\alpha-1}\dint z\right)^{d-k}}{(\sin v)^{\alpha d + 1}} \dint v
\end{equation}
because $\cosh u = \cosh (-\ii v + \ii \frac {\pi}{2}) = \sin v$ and
$$
\tilde F(\ii u)
=
\tilde F \left(v-\frac \pi 2\right)
=
\int_{-\pi/2}^{v-\pi/2} (\cos z)^{\alpha-1} \dint z
=
\int_{0}^{v}(\sin z)^{\alpha-1}\dint z.
$$
We use the branch of $(\sin z)^{\alpha-1}$ in $\C$ with slits at $(-\infty, 0]$ and $[\pi,\infty)$ characterized by $(\sin \frac \pi 2)^{\alpha-1} = 1$.

In the following we shall study the asymptotics of the integral~\eqref{eq:int_T_d_k} as $d\to\infty$.
Shifting the integration contour from $(-\ii \infty + \frac {\pi}{2}, +\ii \infty + \frac {\pi}{2})$ to a contour $\gamma_{1/\sqrt d}$, where $\gamma_{c}$ with $c>0$ is a contour which starts at $-\ii\infty$, proceeds along the negative imaginary half-axis to $-c \ii$, makes a counterclockwise half-loop in the right half-plane to $+c \ii$ (to avoid the singularity at $0$), and then goes to $+\ii \infty$ along the positive imaginary half-axis, we obtain
\begin{align}
T_{d,k} (\alpha)
&=
\frac {1}{\pi \ii} \int_{\gamma_{1/\sqrt d}} \frac{\left(\int_{0}^{v}(\sin z)^{\alpha-1}\dint z\right)^{d-k}}{(\sin v)^{\alpha d + 1}} \dint v\notag\\
&=
\frac {1}{\pi \ii}\int_{\gamma_{1/\sqrt d}} \left( \frac{\int_{0}^{v}(\sin z)^{\alpha-1}\dint z}{(\sin v)^{\alpha}}\right)^{d} \cdot \frac{\dint v}{\left(\int_{0}^{v}(\sin z)^{\alpha-1}\dint z\right)^k \cdot \sin v}.\label{eq:int_T_d_k_1}
\end{align}
Note that the contour shift is legitimate because the function under the sign of the integral decays exponentially as $\Im z\to \pm \infty$ and uniformly as long as $\Re z \in [0,\frac \pi 2]$. This is easy to check, for sufficiently large $d$, by considering the cases $\alpha >1$, $\alpha=1$ and $0<\alpha<1$ separately.
Next we are going to compute the asymptotics of the function which is raised to $d$-th power in~\eqref{eq:int_T_d_k_1}, in the regime when $v\to 0$. First of all, as $z\to 0$ (while staying in the strip $\Re z\in [0,\pi]$, $z\neq 0$), we have
$$
(\sin z)^{\alpha-1} = z^{\alpha-1} \left(\frac {\sin z}{z}\right)^{\alpha-1} = z^{\alpha-1} \left(1 + \frac {1-\alpha}{6}z^2 + O(z^4)\right).
$$
Integrating this, we obtain, as $v\to 0$,
\begin{equation}\label{eq:int_sin_asympt}
\int_{0}^{v}(\sin z)^{\alpha-1}\dint z
=
\frac {v^{\alpha}}{\alpha} + \frac{1-\alpha}{6(\alpha+2)} v^{\alpha+2} + O(v^{\alpha + 4})
=
v^{\alpha} \left(\frac 1 \alpha + \frac {1-\alpha}{6(\alpha + 2)} v^2 + O(v^4)\right).
\end{equation}
Also note that, as $v\to 0$,
\begin{equation}\label{eq:sin_v_alpha_taylor}
(\sin v)^\alpha = v^{\alpha} \left(\frac {\sin v}{v}\right)^\alpha = v^{\alpha} \left(1 - \frac {\alpha}{6} v^2 + O(v^4)\right).
\end{equation}
Combining~\eqref{eq:int_sin_asympt} and~\eqref{eq:sin_v_alpha_taylor}, we obtain
\begin{equation}\label{eq:tech1}
\frac{\int_{0}^{v}(\sin z)^{\alpha-1}\dint z}{(\sin v)^{\alpha}}
=
\frac{\frac 1 \alpha + \frac {1-\alpha}{6(\alpha + 2)} v^2 + O(v^4)}{1 - \frac {\alpha}{6} v^2 + O(v^4)}
=
\frac 1 \alpha \left(1 + \frac{\alpha }{2\alpha + 4} v^2 + O(v^4)\right).
\end{equation}
Now we apply to~\eqref{eq:int_T_d_k_1} the following  substitution:
$$
v = \frac{w}{\sqrt d} \sqrt {\frac {\alpha+2}\alpha}.
$$
Then, \eqref{eq:tech1} implies that
$$
\left( \frac{\int_{0}^{v}(\sin z)^{\alpha-1}\dint z}{(\sin v)^{\alpha}}\right)^{d}
= \frac 1 {\alpha^d} \left(1 + \frac {w^2}{2d} + O(d^{-2}) \right)^d
\sim
\frac {\eee^{w^2/2}} {\alpha^d},
\qquad
d\to\infty
.
$$
Also, \eqref{eq:int_sin_asympt} implies that, as $v\to 0$,
$$
\left(\int_{0}^{v}(\sin z)^{\alpha-1}\dint z\right)^k \cdot \sin v
\sim
\alpha^{-k} \left(\frac{w}{\sqrt d} \sqrt {\frac {\alpha+2}\alpha}\right)^{\alpha k + 1}.
$$
Altogether, we obtain, as $d\to\infty$,
$$
\left( \frac{\int_{0}^{v}(\sin z)^{\alpha-1}\dint z}{(\sin v)^{\alpha}}\right)^{d} \cdot \frac{1}{\left(\int_{0}^{v}(\sin z)^{\alpha-1}\dint z\right)^k \cdot \sin v} \sim
\alpha^{k-d} \eee^{w^2/2} \left(\frac{w}{\sqrt d} \sqrt {\frac {\alpha+2}\alpha}\right)^{-\alpha k - 1}.
$$
In order to apply the saddle point method we need to check that the maximum of the function which we raise to the $d$-th power on the imaginary axis is attained at $v = 0$. This is done in the following
\begin{lemma}
Let $v= \ii y$ with $y\neq 0$. Then,
$$
\left|\frac{\int_0^v (\sin z)^{\alpha-1}}{(\sin v)^\alpha}\right| < \frac 1 \alpha.
$$
\end{lemma}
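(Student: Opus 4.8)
The plan is to show that along the imaginary axis the quotient is in fact a \emph{real, positive} number, after which the claimed bound reduces to an elementary inequality for hyperbolic functions. By complex conjugation symmetry (the domain $\Omega:=\C\setminus((-\infty,0]\cup[\pi,\infty))$, the slits, and the normalization point $\pi/2$ are all invariant under $z\mapsto \bar z$, and $\overline{\sin \bar z}=\sin z$), the quotient $Q(v)$ satisfies $\overline{Q(v)}=Q(\bar v)$, so it suffices to treat $v=\ii y$ with $y>0$. First I would pin down the chosen branch of $(\sin z)^{\alpha-1}$ on the segment $\{\ii t : 0<t\le y\}$. Since $\sin(\ii t)=\ii\sinh t=(\sinh t)\,\eee^{\ii\pi/2}$, only $\arg\sin z$ needs to be identified. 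Tracking it continuously from the normalization $z=\pi/2$ (where $\sin z=1$, $\arg=0$) along the path $\pi/2\to \pi/2+\ii y\to \ii y$, one checks that $\sin z$ stays in the closed first quadrant and its argument increases monotonically from $0$ to $\pi/2$. Hence on the upper imaginary axis $(\sin(\ii t))^{\alpha-1}=(\sinh t)^{\alpha-1}\eee^{\ii(\alpha-1)\pi/2}$ and, likewise, $(\sin v)^{\alpha}=(\sinh y)^{\alpha}\eee^{\ii\alpha\pi/2}$.

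Feeding these into the quotient, and writing $\dint z=\eee^{\ii\pi/2}\dint t$ along the contour $z=\ii t$ (which lies in $\Omega$ and may replace the half-loop $\gamma_c$ because $(\sin z)^{\alpha-1}\sim z^{\alpha-1}$ is absolutely integrable at $0$ for $\alpha>0$), the factor $\eee^{\ii\alpha\pi/2}$ cancels between numerator and denominator and I obtain
\[
\frac{\int_{0}^{\ii y}(\sin z)^{\alpha-1}\dint z}{(\sin v)^{\alpha}}=\frac{\int_{0}^{y}(\sinh t)^{\alpha-1}\dint t}{(\sinh y)^{\alpha}},
\]
which is real and positive. Thus the lemma reduces to the strict inequality $\alpha\int_0^y (\sinh t)^{\alpha-1}\dint t<(\sinh y)^{\alpha}$ for every $y>0$.

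This last inequality I would settle by monotonicity. Set $g(y):=(\sinh y)^{\alpha}-\alpha\int_0^y(\sinh t)^{\alpha-1}\dint t$, so that $g(0)=0$ and
\[
g'(y)=\alpha(\sinh y)^{\alpha-1}\cosh y-\alpha(\sinh y)^{\alpha-1}=\alpha(\sinh y)^{\alpha-1}(\cosh y-1)>0\qquad(y>0),
\]
since $\cosh y>1$. Hence $g(y)>0$ for all $y>0$, which is exactly the required bound. As a consistency check, the ratio tends to $1/\alpha$ as $y\to 0^{+}$, reflecting that the supremum $1/\alpha$ is approached precisely at the saddle point $v=0$. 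The only delicate point in the whole argument is the branch bookkeeping of the first paragraph: one must verify that the \emph{same} branch determination dictated by the slit configuration and the normalization at $\pi/2$ governs simultaneously the numerator integrand on $(0,\ii y]$ and $(\sin v)^{\alpha}$, so that the phases cancel exactly; everything after that is elementary.
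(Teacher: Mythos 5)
Your proposal is correct and follows essentially the same route as the paper: reduce to $y>0$, identify the quotient with the real ratio $\int_0^y(\sinh t)^{\alpha-1}\,\dint t\,/\,(\sinh y)^\alpha$, and then prove the strict elementary inequality $\alpha\int_0^y(\sinh t)^{\alpha-1}\,\dint t<(\sinh y)^\alpha$. The only differences are cosmetic: the paper finishes by substituting $t=\sinh u$ and using $1/\sqrt{1+t^2}<1$, whereas you use monotonicity of $g(y)=(\sinh y)^\alpha-\alpha\int_0^y(\sinh t)^{\alpha-1}\,\dint t$, and your explicit branch bookkeeping merely spells out what the paper's modulus computation leaves implicit.
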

\begin{proof}
Without restriction of generality let $y>0$. Using the substitutions $z:= \ii u$ and then $t:=\sinh u$, we obtain
$$
\left|\frac{\int_0^v (\sin z)^{\alpha-1}\dint z}{(\sin v)^\alpha}\right|
=
\frac{\int_0^y (\sinh u)^{\alpha-1}\dint u}{(\sinh y)^\alpha}
=
\frac{\int_0^{\sinh y} t^{\alpha-1}\frac{\dint t}{\sqrt{1+t^2}}}{(\sinh y)^\alpha}
<
\frac{\int_0^{\sinh y} t^{\alpha-1} \dint t}{(\sinh y)^\alpha}
= \frac 1 \alpha,
$$
which proves the claim.
\end{proof}
Knowing this, we can apply the standard saddle point method as in~\cite[Section~45]{sidorov_fedoryuk_shabunin_book},  which yields
$$
\lim_{d\to\infty} \frac{T_{d,k}(\alpha)}{(\alpha d)^{\alpha k /2}}
=
\frac {1}{\pi \ii}  \frac{\alpha^{k-d} }{(\alpha+2)^{\alpha k /2}} \int_{\gamma_{\sqrt{\alpha/(\alpha+2)}}} \eee^{w^2/2} w^{-\alpha k - 1} \dint w.
$$
To compute the remaining integral, we use the substitution $z =  - w^2/2$ meaning that $w= \sqrt{-2z}$, where we take the branch of $\sqrt z$ which is analytic on $\C\backslash (-\infty,0)$. This results in
\begin{multline*}
\frac 1 {\pi \ii} \int_{\gamma_{\sqrt{\alpha/(\alpha+2)}}}  \eee^{w^2/2} w^{-\alpha k - 1} \dint w
=
- \frac 1 {\pi \ii} \int_C \eee^{-z} (\sqrt {-2z})^{-\alpha k - 1} \frac {\dint z}{\sqrt{-2z}}
\\
=
\frac { \ii}  {2 \pi 2^{\frac{\alpha k}2}} \int_C \eee^{-z} (-z)^{-\frac{\alpha k + 2}2} \dint z
=
\frac{1}{2^{\frac{\alpha k} 2} \Gamma\left(\frac {\alpha k+2}{2}\right)},
\end{multline*}
where the contour $C$  starts at $+\infty$ on the real axis, encircles the origin in the counterclockwise direction and returns to $+\infty$. In the last step we used the  well-known~\cite[12.22, p.~245]{whittaker_watson} Hankel formula
$$
\frac 1 {\Gamma(x)} = \frac {\ii}{2\pi} \int_C \eee^{-z} (-z)^{-x} \dint z,
\qquad x\in \C.
$$
It follows that
\begin{equation}\label{eq:T_d_k_asymptotics}
\lim_{d\to\infty} \frac{T_{d,k}(\alpha)}{(\alpha d)^{\alpha k /2}} =  \frac{\alpha^{k-d}  }{(\alpha + 2)^{\alpha k/2}2^{\alpha k/ 2 } \Gamma\left(\frac {\alpha k+2}{2}\right)}.
\end{equation}
Inserting this into~\eqref{eq:E_f_k_formula_explicit} completes the proof of Theorem~\ref{theo:Ef_Poi_poly_fixed_k}. \hfill $\Box$

\subsection{Proof of Theorem~\ref{theo:asympt_A_n_k}}
We just have to take $\alpha = 1$ in the previous proof. Indeed, it follows from~\eqref{eq:A_n_k_def} and~\eqref{eq:T_d_k_asympt} that
$$
A[d,k] = \frac{d!}{(d-k)!} T_{d,k}(1).
$$
The claim follows from~\eqref{eq:T_d_k_asymptotics} with $\alpha = 1$. Note that the proof of~\eqref{eq:T_d_k_asymptotics} is valid for all $k\in \Z$. \hfill$\Box$

\subsection{Proof of Theorem~\ref{theo:Ef_Poi_poly_linear_k}}
By~\eqref{eq:theo:main} and~\eqref{eq:A_n_k_def} we have
\begin{align}
\E f_{d-k} (\mathcal Z_d)
&=
\frac{\pi^k}{k!} A[d,k] \notag\\
&=
2\cdot \pi^k \binom dk  \frac 1 {2\pi} \int_{-\infty}^{+\infty} (\cosh x)^{-d-1} \left(\frac \pi 2 + \ii x\right)^{d-k} \dd x \notag\\
&=
2\cdot \pi^k \binom dk \frac 1 {2\pi \ii} \int_{-\ii \infty + \frac \pi 2}^{\ii \infty + \frac \pi 2} \frac {z^{d-k}}{(\sin z)^{d+1}} \dint z,
\label{eq:proof_central_explicit}
\end{align}
where in the last line we used the substitution $z:= \frac \pi 2 + \ii x$.
Recall that $k\sim \lambda d$ as $d\to\infty$. Stirling's formula entails that
\begin{equation}\label{eq:proof_central_1}
\lim_{d\to\infty} \frac 1d \log \left(2\cdot \pi^k \binom dk\right) = \lambda \log \pi - (\lambda \log \lambda + (1-\lambda)\log (1-\lambda)).
\end{equation}
The main difficulty is to treat the remaining integral. We write it as
$$
\frac 1 {2\pi \ii} \int_{-\ii \infty + \frac \pi 2}^{\ii \infty + \frac \pi 2} \frac {z^{d-k}}{(\sin z)^{d+1}} \dint z
=
\frac 1 {2\pi \ii} \int_{-\ii \infty + \frac \pi 2}^{\ii \infty + \frac \pi 2} \left(f_d(z)\right)^d \frac{\dint z}{\sin z},
$$
where
$$
f_d(z) := \frac{z^{\frac{d-k}{d}}}{\sin z} \;\; {\underset{d\to\infty}\longrightarrow} \;\; \frac{z^{1-\lambda}}{\sin z} =: f_\infty(z).
$$
Note that the multivalued functions $z\mapsto z^{\frac{d-k}{d}}$ and $z\mapsto z^{1-\lambda}$ have well-defined branches in $\C\backslash (-\infty, 0)$ characterized by requiring them to be real for $z>0$.

Now we are going to shift the integration contour so that we can apply the saddle point method~\cite[Section~45]{sidorov_fedoryuk_shabunin_book}.
Taking the logarithmic derivative, one checks that on the interval $(0,\frac \pi2)$ the function $x\mapsto f_d(x)$ has a unique minimizer at $x = \psi(k/d)$, where we recall that for $\mu\in (0,1)$ we denote by $\psi(\mu)$ the unique solution to
$$
1-\mu = \psi(\mu) \cot \psi(\mu), \qquad 0 < \psi(\mu) <\pi/2.
$$
Existence and uniqueness of the solution follow from the fact that the function $y\mapsto y \cot y$ is continuous and decays from $1$ to $0$ on the interval $(0,\frac \pi 2)$.   Note that $\lim_{d\to\infty}\psi(k/d) = \psi(\lambda) \in (0,\frac \pi 2)$.  We now shift the contour of integration to the vertical line passing through $\psi(k/d)$, that is we write
$$
\frac 1 {2\pi \ii} \int_{-\ii \infty + \frac \pi 2}^{\ii \infty + \frac \pi 2} \frac {z^{d-k}}{(\sin z)^{d+1}} \dint z
=
\frac 1 {2\pi \ii} \int_{-\ii \infty + \psi(k/d)}^{\ii \infty + \psi(k/d)}  \left(f_d(z)\right)^d  \frac{\dint z}{\sin z}.
$$
This is legitimate because $\sin z$ increases exponentially fast as $\Im z\to \pm \infty$ (uniformly in $\Re z$).
To apply the saddle point method, we need to verify that,  on the contour of integration,  the maximum of the function $|f_d(z)|$ is attained at $z=\psi(k/d)$. This is done in the following
\begin{lemma}\label{lem:saddle_point_justify}
Let $z= x+iy$ with $0 < x < \pi$ and $y\in \R$. Then, for every $\mu \in (0,1)$ we have
$$
\left|\frac{z^{1-\mu}}{\sin z}\right| \leq \frac{x^{1-\mu}}{\sin x}.
$$
The inequality is strict if $y\neq 0$.
\end{lemma}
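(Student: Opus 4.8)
The plan is to strip the complex inequality down to an elementary real one. Since $0<x<\pi$ forces $\Re z=x>0$, the point $z=x+iy$ stays clear of the branch cut $(-\infty,0]$, so the chosen branch satisfies $z^{1-\mu}=\exp((1-\mu)\Log z)$ and therefore $|z^{1-\mu}|=|z|^{1-\mu}=(x^2+y^2)^{(1-\mu)/2}$. Hence the statement involves only moduli and reads
\[
\frac{(x^2+y^2)^{(1-\mu)/2}}{|\sin z|}\le \frac{x^{1-\mu}}{\sin x}.
\]
First I would record the standard identity $|\sin z|^2=\sin^2 x+\sinh^2 y$, obtained from $\sin(x+iy)=\sin x\cosh y+i\cos x\sinh y$. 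Squaring the target inequality, cross-multiplying by the positive quantities $x^{2(1-\mu)}$ and $\sin^2 x$, and dividing through, the claim becomes equivalent to
\[
\left(1+\frac{y^2}{x^2}\right)^{1-\mu}\le 1+\frac{\sinh^2 y}{\sin^2 x},
\]
with strict inequality required when $y\neq 0$.

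Next I would bound the left-hand side by Bernoulli's inequality: since $1-\mu\in(0,1)$, for every $t\ge 0$ one has $(1+t)^{1-\mu}\le 1+(1-\mu)t$, strictly when $t>0$. Taking $t=y^2/x^2$ reduces the problem to the purely real estimate
\[
(1-\mu)\frac{y^2}{x^2}\le \frac{\sinh^2 y}{\sin^2 x}.
\]
This follows at once from two textbook facts, $\sin^2 x\le x^2$ (all real $x$) and $\sinh^2 y\ge y^2$ (all real $y$), which together give $\frac{\sinh^2 y}{\sin^2 x}\ge \frac{y^2}{x^2}\ge(1-\mu)\frac{y^2}{x^2}$, the last step using $1-\mu<1$.

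For the strictness claim when $y\neq 0$, I would observe that $t=y^2/x^2>0$, so Bernoulli's inequality is already strict, and multiplying back by the strictly positive $\sin^2 x$ (positive throughout $(0,\pi)$) preserves strictness. The only point demanding care is the \emph{direction} of the estimates: because $|z|^{1-\mu}\ge x^{1-\mu}$, the numerator pushes against the desired inequality, and the bound survives solely because the exponent $1-\mu$ is strictly below $1$, so that the sub-linear growth of $(1+t)^{1-\mu}$ is dominated by the super-linear growth of $\sinh^2 y$. Tracking this balance is the main—though still elementary—obstacle; once the identity for $|\sin z|^2$ is in place, the whole argument collapses to Bernoulli together with the two classical one-variable inequalities above.
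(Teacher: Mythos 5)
Your proof is correct, and it takes a genuinely different route from the paper. The paper's proof invokes the Euler product formula $\sin z = z\prod_{n\ge 1}(1-\frac{z^2}{n^2\pi^2})$ and compares the two sides factor by factor: $|z^{-\mu}|\le x^{-\mu}$ since $|z|\ge x$, and $|1-\frac{z^2}{n^2\pi^2}|\ge 1-\frac{x^2}{n^2\pi^2}$ for each $n$, the latter following from $|u|\ge \Re u$ applied to $1\pm z/(n\pi)$. You instead use the modulus identity $|\sin z|^2=\sin^2x+\sinh^2y$ to reduce everything to the real inequality $\bigl(1+\frac{y^2}{x^2}\bigr)^{1-\mu}\le 1+\frac{\sinh^2 y}{\sin^2 x}$, which you settle by Bernoulli's inequality together with $\sin^2x\le x^2$ and $\sinh^2y\ge y^2$; all reduction steps are reversible (squaring and cross-multiplying by strictly positive quantities), so the equivalence and the strictness for $y\neq 0$ both go through. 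Your version is more quantitative and entirely self-contained at the level of one-variable calculus, and it even shows the stronger statement with exponent $1$ in place of $1-\mu$ (since $\sinh^2y/\sin^2x\ge y^2/x^2$ already); in that light your closing remark that the bound survives \emph{solely} because $1-\mu<1$ is slightly overstated, though this is commentary rather than a gap. The paper's product argument, by contrast, requires no computation at all and isolates a structural reason for the inequality --- each factor of $\sin z$ is separately monotone in $|y|$ --- which is the kind of argument that transfers to other entire functions with only real zeros.
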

\begin{proof}
In view of the product formula for the sine function,  our inequality turns into
$$
\left|\frac{z^{-\mu}}{\prod_{n=1}^\infty (1- \frac{z^2}{n^2 \pi^2})}\right| \leq \frac{x^{-\mu}}{\prod_{n=1}^\infty (1-\frac{x^2}{n^2\pi^2})}.
$$
Clearly, $|z|\geq x$ and hence $|z^{-\mu}|\leq x^{-\mu}$. To complete the proof, it suffices to verify that $|1-\frac{z^2}{n^2\pi^2}| \geq |1- \frac {x^2}{n^2\pi^2}|$ for all $n\in \N$. Write $z_n:= z/(n\pi)$ and $x_n:= \Re z_n = x/(n\pi)$. Note that $x_n\in (0,1)$.  We need to show that $|1-z_n^2| \geq |1- x_n^2|$. To this end, it suffices to check that $|1-z_n|\geq 1-x_n$ and $|1+z_n| \geq 1+x_n$. The latter inequalities are evident because $|u| \geq \Re u$ for every $u\in \C$.
\end{proof}

Having Lemma~\ref{lem:saddle_point_justify} at our disposal, we can apply the standard saddle point asymptotics~\cite[Theorem~2 on p.~423]{sidorov_fedoryuk_shabunin_book} to obtain
$$
\frac 1 {2\pi \ii} \int_{-\ii \infty + \psi(k/d)}^{\ii \infty + \psi(k/d)}  \left(f_d(z)\right)^d  \frac{\dint z}{\sin z}
\sim
\frac{(f_d(\psi(k/d)))^d}{\sqrt{2\pi d \cdot (\log f_d)''(\psi(k/d))} \cdot  \sin \psi(k/d)},
\;\;\;
d\to\infty.
$$
Note that the left-hand side is actually real because $\overline{f_d(z)} = f_d(\overline z)$, and, in fact, it is even positive by~\eqref{eq:proof_central_explicit}.  Passing to the logarithmic asymptotics and noting that $\lim_{d\to\infty} f_d(\psi(k/d)) = f_\infty(\psi(\lambda))$, we arrive at
\begin{equation}\label{eq:proof_central_2}
\lim_{d\to\infty}
\frac 1d\log \left(\frac 1 {2\pi \ii} \int_{-\ii \infty + \psi(k/d)}^{\ii \infty + \psi(k/d)}  \left(f_d(z)\right)^d  \frac{\dint z}{\sin z}\right)
=
\log f_\infty(\psi(\lambda)) = (1-\lambda) \log \psi(\lambda) - \log \sin \psi(\lambda).
\end{equation}
Inserting~\eqref{eq:proof_central_1} and~\eqref{eq:proof_central_2} into~\eqref{eq:proof_central_explicit} completes the proof of Theorem~\ref{theo:Ef_Poi_poly_linear_k}.  \hfill $\Box$

\subsection{Proof of Theorem~\ref{theo:asympt_angle}}
It is known from Theorem~2.5 in~\cite{kabluchko_poisson_zero} that
\begin{equation}\label{eq:E_alpha_explicit}
\E \alpha (C_{d+\ell, d}) = \frac{(d+\ell)!}{2\cdot  \pi^{d+\ell}} \sum_{\substack{j\in \{0,\ldots,\ell\}\\ j \text{ is even }}}
B\{d+\ell+1, d + j + 1\} (d+j)^2 A[d+j-1,-1],
\end{equation}
where the numbers  $B\{n,k\}$ with $n\in \N$ and $k\in \{1,\ldots, n\}$ are defined~\cite[Equation~(3.15)]{kabluchko_poisson_zero} as follows:
$$
B\{n,k\} = \frac{1}{(k-1)! (n-k)!} \int_0^\pi (\sin x)^{k-1} x^{n-k} \dint x.
$$
On the one hand, it follows from Theorem~\ref{theo:asympt_A_n_k} that  for every fixed $j\in \N_0$ we have
\begin{equation}\label{eq:A_n_k_asympt}
A[d+j-1,-1] \sim \frac{\sqrt 6}{d^{3/2} \sqrt \pi}, \qquad d\to\infty.
\end{equation}
On the other hand, for fixed $0\leq j \leq \ell$ we have
\begin{align}
B\{d + \ell+1, d + j+1\}
&=
\frac {1}{(d+j)!(\ell-j)!} \int_{0}^\pi (\sin x)^{d+j} x^{\ell - j} \dint x
\notag\\
&\sim
\frac {1}{(d+j)!(\ell-j)!} \cdot \left(\frac \pi 2\right)^{\ell - j} \sqrt {2\pi / d},
\qquad
d\to\infty,
\label{eq:B_n_k_asympt}
\end{align}
by the standard Laplace method~\cite[Theorem~2 on p.~405]{sidorov_fedoryuk_shabunin_book} because the maximum of $\sin x$ on $[0,\pi]$ is attained at $x= \pi/2$ and $(\sin x)''$ equals $-1$ at $x= \pi/2$. Combining~\eqref{eq:A_n_k_asympt} and~\eqref{eq:B_n_k_asympt}, we get
$$
B\{d+\ell + 1, d + j + 1\} (d+j)^2 A[d+j-1,-1] \sim \frac{2\sqrt 3}{(d+j)!}  \frac{(\pi/2)^{\ell - j}}{(\ell -j)!},
\qquad
d\to\infty.
$$
Knowing this, we see that in the sum on the right-hand side of~\eqref{eq:E_alpha_explicit} the term with $j=0$ dominates all other terms, which gives
$$
\E \alpha (C_{d+\ell, d})
\sim
\frac{(d+\ell)!}{2\cdot  \pi^{d+\ell}} \cdot \frac{2\sqrt 3}{d!}  \frac{(\pi/2)^{\ell }}{\ell!}
\sim
\sqrt 3 \,\frac{(d/2)^{\ell}}{\ell!} \pi^{-d}
,
\qquad
d\to\infty.
$$
The proof of Theorem~\ref{theo:asympt_angle} is complete.  \hfill $\Box$

\subsection{Proof of Proposition~\ref{prop:asympt_sec_moment}}\label{subsec:proof_asympt_sec_moments}
By a formula of Schneider~\cite[Theorem 1.2]{schneider_second_moments}, see~\eqref{eq:sec_moment_explicit}, we have
\begin{equation}\label{eq:schneider_second_moment_restate}
\E f_0^2(\mathcal Z_d^{\text{typ}}) = \sum_{j=0}^d a_{d,j}
\quad
\text{ with }
\quad
a_{d,j}:= \frac{2^d d! \pi^{j} }{4^j (d-j)!\Gamma(\frac j2+1)^2}.
\end{equation}
Let us characterize the asymptotic behavior of $a_{d,j}$ if $j = j(d) = [\lambda d]$ with $\lambda\in [0,1]$, as $d\to\infty$. The  Stirling formula implies that
$$
\log \Gamma(x+1) = x\log x - x + o(d)
\quad
\text{ uniformly  over }
\quad
x\in [0, d],
$$
as $d\to\infty$.
This yields
$$
\log a_{d,j} =
d\log 2 + (d\log d - d) + \lambda d \log \pi - \lambda d \log 4 -  (d-j) \log (d-j)  + (d-j) - j \log(j/2) + j + o(d),
$$
as $d\to\infty$, where the error term is uniform in $\lambda\in [0,1]$. In fact, in the above expression we can replace $j=[\lambda d]$ by $\lambda d$ resulting in an error term of at most $O(\log d)$  because
$$
\sup_{\substack{0\leq x\leq y \leq d\\ y-x\leq 1}}|(x\log x-x) - (y\log y-y)| = O(\log d), \quad
\text{ as } d\to\infty,
$$
as one easily checks using the mean value theorem and the boundedness of $x\mapsto x\log x-x$ on $[0,2]$. Replacing $j$ by $\lambda d$ and performing simple calculations  we arrive at
\begin{equation}\label{eq:sec_moment_proof_exp_profile}
\lim_{d\to\infty} \frac 1d \log a_{d, j} = \log 2 + \lambda \log \frac {\pi}2 - \lambda\log \lambda -(1-\lambda)\log (1-\lambda) = : I(\lambda)
\end{equation}
uniformly over $\lambda \in [0,1]$.  Taking the derivative, we see that the maximum of the function $I(\lambda)$ on the interval $[0,1]$ is attained at $\lambda_0 = \pi/(\pi+2)$ and the corresponding value is $I(\lambda_0) = \log (\pi +2)$.

We can now complete the proof as follows. Take some $\eps>0$. Then, for sufficiently large $d>d(\eps)$, the uniformity of convergence in~\eqref{eq:sec_moment_proof_exp_profile} implies that  $\frac 1d \log a_{d, j}\leq (1+\eps) \log (\pi +2)$ for all $j\in \{0,\ldots, d\}$.  Then, by~\eqref{eq:schneider_second_moment_restate},
$$
a_{d,[\lambda_0 d]} \leq \E f_0^2(\mathcal Z_d^{\text{typ}}) = \sum_{j=0}^d a_{d,j} \leq (d+1) \max_{j=0,\ldots, d} a_{d,j} \leq (d+1) \eee^{(1+\eps) \log (\pi +2)},
$$
provided $d>d(\eps)$ is sufficiently large.
Taking the logarithm, dividing by $d$, letting $d\to\infty$ and recalling that $\lim_{d\to\infty} \frac 1d \log a_{d,[\lambda_0 d]} = \log (\pi +2)$ by~\eqref{eq:sec_moment_proof_exp_profile}, we obtain
$$
\log (\pi + 2)
\leq
\liminf_{d\to\infty} \frac 1d \log \E f_0^2(\mathcal Z_d^{\text{typ}})
\leq
\limsup_{d\to\infty} \frac 1d \log \E f_0^2(\mathcal Z_d^{\text{typ}})
\leq
(1+\eps)\log (\pi + 2).
$$
Since $\eps>0$ is arbitrary, the proof of Proposition~\ref{prop:asympt_sec_moment} is complete.
\hfill $\Box$

\section*{Acknowledgement}
The present note has been largely  motivated by an application of Theorem~\ref{theo:asympt_A_n_k} given in~\cite{banerjee_halpern_peters}. The author expresses his gratitude to Spencer Peters and Christoph Th\"ale for stimulating discussions, to Matthias L\"owe for pointing out~\cite{fedrigo_etal}, and to an unknown referee for a careful reading of the manuscript.
Supported by the German Research Foundation under Germany's Excellence Strategy  EXC 2044 -- 390685587, \textit{Mathematics M\"unster: Dynamics - Geometry - Structure} and by the DFG priority program SPP 2265 \textit{Random Geometric Systems}.


\bibliography{poisson_polytopes_bib}
\bibliographystyle{plainnat}

\end{document}